\definecolor{labelkey}{rgb}{0.6,0,1}
\newcounter{corr}
\definecolor{violet}{rgb}{0.580,0.,0.827}
\newcommand{\corr}[3]{\typeout{Warning : a correction remains in page
\thepage}
				\stepcounter{corr}        
				{\color{blue}\ifmmode\text{\,\sout{\ensuremath{#1}}\,}\else\sout{#1}\fi}
        {\color{red}#2}
        {\color{violet} \fbox{\thecorr}#3}}
\newcounter{cst}
\def\ctel#1{C_{\refstepcounter{cst}\@bsphack
\protected@write\@auxout{}%
           {\string\newlabel{#1}{{\thecst}{\thepage}}}\thecst}}
\newcounter{cexp}
\def\terml#1{T_{\refstepcounter{cexp}\@bsphack
\protected@write\@auxout{}%
           {\string\newlabel{#1}{{\thecexp}{\thepage}}}\thecexp}}
\newcommand{\mathbi}[1]{{\boldsymbol #1}}
\newcommand{\eop}{{\unskip\nobreak\hfil\penalty50
           \hskip2em\hbox{}\nobreak\hfil\mbox{\rule{1ex}{1ex} \qquad}
   \parfillskip=0pt
   \finalhyphendemerits=0\par\medskip}}
\renewenvironment{proof}[1][]{\noindent {\bf Proof#1. } }{\eop}
\newtheorem{theorem}{Theorem}[section]
\newtheorem{lemma}[theorem]{Lemma} 
\newtheorem{definition}[theorem]{Definition}
\definecolor{shadecolor}{gray}{0.92}
\definecolor{TFFrameColor}{gray}{0.92}
\definecolor{TFTitleColor}{rgb}{0,0,0}
\newcommand{\ba}{\begin{array}{llll}   }
\newcommand{\bac}{\begin{array}{c}}
\newcommand{\bari}{\begin{array}{r}}
\newcommand{\ea}{\end{array}}
\newcommand{\ban}{\begin{array}{llll}}
\newcommand{\ean}{\end{array}}
\newcommand{\be}{\begin{equation}}
\newcommand{\ee}{\end{equation}}
\newcommand{\beqsys }{\beqtab \left \{ \begin{array}{l}}
\newcommand{\eeqsys }{\end{array} \right . \eeqtab }
\newcommand{\benum}{\begin{enumerate}}
\newcommand{\eenum}{\end{enumerate}}
\newcommand{\beqtab}{\begin{eqnarray}} 
\newcommand{\eeqtab}{\end{eqnarray}}
\newcommand{\combineln}[2]{{\rm(\textbf{#1#2})}}
\newcommand{\centercv}{\x_\cv}                         % centre des mediatrices   
\newcommand{\cv}{K}
\newcommand{\disc}{{\mathcal D}}
\renewcommand{\div}{{\mathop{\rm div}}}
\newcommand{\divg}{\div}
\newcommand{\edge}{\sigma}
\newcommand{\edges}{{\mathcal E}}              % ensemble des aretes
\newcommand{\edgescv}{{{\edges}_\cv}}
\newcommand{\grad}{\nabla}
\newcommand{\mesh}{{\mathcal M}}
\newcommand{\norm}[2]{\| #1 \|_{#2}}
\renewcommand{\O}{\Omega}
\newcommand{\phy}{\varphi}
\newcommand{\R}{\mathbb R}
\newcommand{\V}{\mathbf{V}}
\newcommand{\x}{\mathbi{x}}
\renewcommand{\norm}[2]{\left\Vert#1\right\Vert_{#2}}
\def\ograd{\overline{\grad}}
\def\gradD{\nabla_\disc}
\DeclareDocumentCommand{\RPiD}{ O{\disc} O{,0} }{\Pi_{#1}(X_{#1#2})}
\def\Fdof#1{{\bm{\mathcal{F}}(#1,\R)}}
\def\Fdof{\@ifnextchar[{\@with}{\@without}}
\def\@with[#1]#2{{\bm{\mathcal{F}}(#2;#1)}}
\def\@without#1{{\bm{\mathcal{F}}(#1,\R)}}
\def\RT0{\mathbb{RT}_0}
\definecolor{labelkey}{rgb}{0.6,0,1}
\def\BState{\State\hskip-\ALG@thistlm}
\begin{document}

	\title[]{A fully local hybridised second-order accurate scheme for advection-diffusion equations}
	
	\author{Hanz Martin Cheng}
\address{Department of Mathematics and Computer Science, Eindhoven University of Technology, P.O. Box 513, 5600 MB Eindhoven, The Netherlands.
	\texttt{h.m.cheng@tue.nl}}
%\author{Barry Koren}
%\address{Department of Mathematics and Computer Science, Eindhoven University of Technology, P.O. Box 513, 5600 MB Eindhoven, The Netherlands.
%	\texttt{b.koren@tue.nl}}
	
	\date{\today}
	
	%
	% Use the package "url.sty" to avoid
	% problems with special characters
	% used in your e-mail or web address
	%
	%	\keywords{advection dominated PDEs, gradient discretisation method, gradient schemes, Eulerian Lagrangian Localised Adjoint Method, Modified Method of Characteristics, mass conservation, convergence analysis}
	%	
	%	\subjclass[2010]{
	%		65M08, % FV methods
	%		65M12, % Stability + conv
	%		65M25, % method of characteristics
	%		65M60, % FE methods
	% 		65M22  % Solution of discretized equations [See also 65Fxx, 65Hxx]
	%		76S05	% Flows in porous media; filtration; seepage
	%	}
	\maketitle
		\begin{abstract}
			In this paper, we present a fully local  second-order upwind scheme, applicable on generic meshes. This is done by hybridisation, which is achieved by introducing unknowns on each edge of the mesh. By doing so, fluxes only depend on values associated to a single cell, and thus, this scheme can easily be applied even on cells near the boundary of the domain. Another advantage of hybridised schemes is that static condensation can be employed, leading to a very efficient implementation.  A convergence analysis, which also covers a flux-limited TVD variant of the scheme, is then presented. Numerical results are also given in order to compare this with a hybridised first-order upwind scheme and a classical cell-centered second-order upwind type scheme.
		\end{abstract}
	\section{Introduction}
	
		In this paper, we study a family of finite volume methods for stationary advection-diffusion equations. We start by presenting the choice of discretisation for the diffusive fluxes. This will be done via the hybrid mimetic mixed (HMM) method \cite{dro-10-uni}, which is equivalent to the SUSHI method \cite{EGH10-SUSHI}. For the advective fluxes, we propose a fully local second-order scheme. To motivate the problem, we start with a revision of the first-order upwind scheme, which is the easiest to implement.  One of the main disadvantages of the first-order upwind scheme is that it may easily introduce too much numerical diffusion into the solution of the system \cite{H06-bookIntroNum}. High-order schemes have been proposed in order to help mitigate the introduction of too much numerical diffusion. However, in order to achieve a high-order discretisation, more degrees of freedom (DOFs) are needed, which translates into a higher computational cost. In this work, we focus on second-order schemes. Classical second-order upwind schemes on Cartesian meshes \cite{PVW65-orig-upw} involve a 9-point stencil; hence, some  interpolation techniques or introduction of ghost cells are needed in order to apply them on cells near the boundary of the domain. 
		
		The novelty of this work is the introduction of a hybridised fully local second-order scheme, inspired by the ideas in \cite{VDM11-adv-diff}. To do so, in addition to unknowns at the cell center, we introduce one additional unknown to each cell face. This results to more unknowns in the system: now having $\sharp$ of cells $+$ $\sharp$ of faces (edges) unknowns, as compared to $\sharp$ of cells for cell-centered schemes. However, this allows us to reduce the dependence of the fluxes on values from neighboring cells. In the case of square cells (see Figure \ref{fig.CS}), we only need one cell value, and four interface values. Since we remove the direct dependence of the fluxes on the values from neighboring cells, this allows us to directly apply this hybridised second-order scheme even near the boundaries of the domain. Moreover, static condensation can be employed to make the implementation more efficient. Following this, we then present some convergence results, which cover, for the advective component, the classical second-order upwind type schemes, and also include some nonlinear total variation diminishing (TVD) methods \cite{BM-2dFV,L10-monotoneFV}. 
%		Due to its dependence on the values in four cells, the second-order upwind scheme cannot be directly used near the boundaries of the domain. Hence, without introducing additional unknowns (such as ghost cells), only a first-order scheme can be implemented near the boundaries. 	
	\begin{figure}[h!]
	\includegraphics[width=0.15\linewidth]{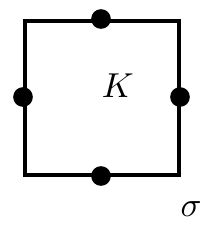} 
	\caption{Values needed for computing a hybridised second-order flux for edge $\sigma$.}
	\label{fig.CS}
\end{figure}

 The paper is organised into the following sections: We start by considering a stationary advection-diffusion equation, and present it in its finite volume form. Afterwards, we provide a discretisation of the diffusive fluxes, which will be done via the HMM method. We then give a short review of the cell-centered first and second-order upwind schemes on Cartesian meshes and give their extension onto generic polygonal meshes. Following this, we propose a hybridised fully local second-order scheme, applicable on generic meshes. Here, we show that by introducing additional unknowns along the faces (edges) of each cell, we can create a stencil that does not depend on values from neighboring cells. Convergence results will then be presented. Numerical tests will then be performed to illustrate the accuracy of the scheme, and also to compare this with a hybridised first-order upwind scheme and a classical cell-centered second-order upwind type scheme.

\section{The model problem}

	Consider the following stationary, linear scalar advection-diffusion equation with Dirichlet boundary conditions on a polytopal domain $\O$: Find $c\in H^1(\O)$ such that 
\begin{equation}\label{eq:model}
\begin{aligned}
\nabla\cdot(-\Lambda \nabla c + c\V) &= f \qquad \mathrm{ in} \quad \O, \\
c &= g \qquad \mathrm{ in } \quad \partial\Omega.
\end{aligned}
\end{equation}
Here, $\Lambda$ is a diffusion tensor, $\V$ is a velocity field, and $f$ is a source term. We begin by stating the assumptions on the data:
\begin{enumerate}[label=\combineln{A}{\arabic*}]
	\item \label{assum.Diff} $\Lambda$ is a measurable function from $\O$ to the set of $d \times d$ symmetric positive definite matrices, and there exists $\underline{\lambda}, \overline{\lambda} >0 $ such that, for a.e. $x\in \O$, the eigenvalues of $\Lambda(x)$ are in $[\underline{\lambda}, \overline{\lambda}]$;
	\item \label{assum.Vel} $\V \in C^1(\overline{\O})^d$ with $\divg (\V) \geq 0$;
	
	\item \label{assum.Src} $f\in L^2(\O)$.
\end{enumerate}

\subsection{Finite volume discretisation of the advection-diffusion equation} We now write a finite volume discretisation of the model \eqref{eq:model}. First, we define a mesh in the simplest intuitive way: a partition of $\O$ into polygonal (in 2D) or polyhedral (in 3D) sets. Following the notations in \cite[Definition 7.2]{GDMBook16}, we denote $\mathcal{T} = (\mesh; \edges)$ to be
the set of cells $K$ and faces (edges in 2D) $\sigma$ of our mesh, respectively. For each cell $K\in \mesh$, we denote by $|K|$ its $d$ dimensional measure, $\mathrm{diam}(K)$ its diameter, $\edges_K \subset \edges$ the set of faces (edges) of cell $K$, and $\x_K$ the cell center of gravity. The collection of faces is a disjoint union of two sets, $\edges = \edges_{\mathrm{int}} \cup \edges_{\mathrm{ext}}$, where $\edges_{\mathrm{int}}$ and $\edges_{\mathrm{ext}}$ denote the set of interior and exterior faces, respectively. For each interior face $\sigma\in\edges_K$, we denote by $K'$ the cell that shares the face $\sigma$ with $K$. Also, for $\sigma\in\edges$, we denote by $|\sigma|$ its $d-1$ dimensional measure, $\x_\sigma$ its center, and $\mathbf{n}_{K,\sigma}$ its normal direction pointing out of $K$. We also denote by $d_{K,\sigma}$ the orthogonal distance from $\x_K$ to $\sigma$ (see Figure \ref{fig.notations_2D_mesh}).

Denoting by $\mesh_h$ the mesh such that, $\max_{K\in\mesh_h}(\mathrm{diam}(K))=h$, the analysis performed in Section \ref{sec:Convergence} will require the following assumptions on the mesh.
\begin{enumerate}[label=\combineln{MR}{\arabic*}]
	\item \label{assum. mesh_star} Every cell $K\in\mesh_h$ is star-shaped with respect to $\x_K$.
	\item \label{assum. mesh_reg} The mesh regularity parameter, which is defined as 
	\begin{equation}\nonumber
	\mathrm{regul}(\mesh_h) := \max\bigg( \max_{\sigma \in \edges_{h,\mathrm{int}}} \frac{d_{K,\sigma}}{d_{K',\sigma}},  \max_{\sigma \in \edges_{h,\mathrm{ext}}} \frac{\mathrm{diam}(K)}{d_{K,\sigma}}, \max_{K\in\mesh_h} \mathrm{card}(\edges_K)\bigg),
	\end{equation}
	where $\mathrm{card}(\edges_K)$ denotes the number of edges of a cell $K$, is uniformly bounded as $h\rightarrow 0$.
\end{enumerate} 

\begin{figure}[h]
	\centering
	\includegraphics[width=0.3\linewidth]{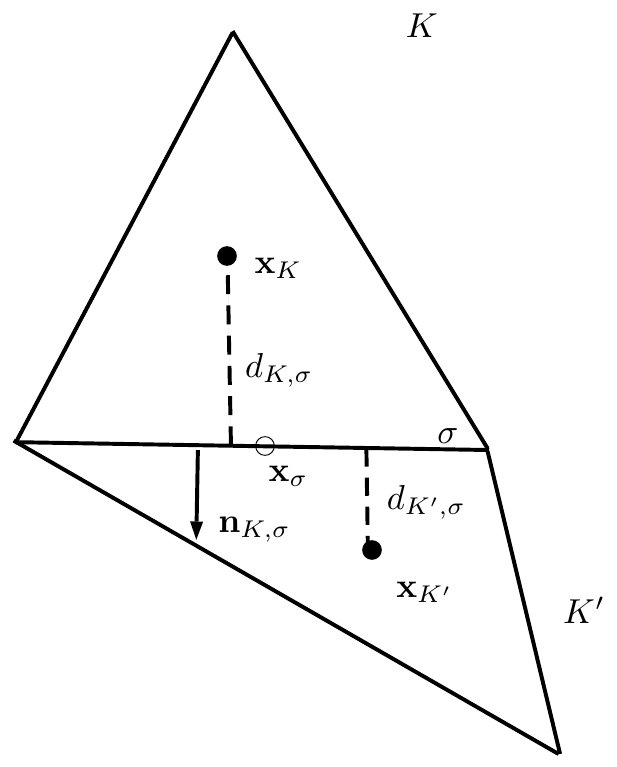} 
	\caption{Notations in a mesh cell.}\label{fig.notations_2D_mesh}
\end{figure}

We now denote the unknowns of the system, one on each cell and one on each edge by
\[
X_\disc:= \{\big((w_K)_{K\in\mesh},(w_\sigma)_{\sigma\in\edges}, w_K,w_\sigma \in \R \big)\}.
\]
Now, we take the integral of \eqref{eq:model} over a cell $K$, and use Gauss' Theorem to obtain
\[ \sum_{\edge\in\edgescv} \int_\sigma(-\Lambda \nabla c + c\V) \cdot \mathbf{n}_{K,\sigma}\,\,ds = \int_K f\,\,d\x.\]
 Following the ideas in \cite{VDM11-adv-diff}, we write $F_{K,\sigma}^D \approx \frac{1}{|\sigma|}\int_\sigma \Lambda \nabla c \cdot \mathbf{n}_{K,\sigma}\,\,ds$ and $F_{K,\sigma}^A \approx \frac{1}{|\sigma|}\int_\sigma c\V \cdot \mathbf{n}_{K,\sigma}\,\,ds$ approximations of the average diffusive and advective fluxes along $\sigma$, respectively. The discrete equation for flux balance in each cell $K$ reads
\begin{equation}\nonumber
\begin{aligned}
\sum_{\edge \in \edgescv}|\sigma|(F_{K,\sigma}^D+F_{K,\sigma}^A)= |K| f_K,
\end{aligned}
\end{equation}
where $f_K$ is the average value of the source term at cell $K$.
Here, we see that the main ingredients for obtaining an approximation for the solution $c$ is the definition of the diffusive and advective fluxes $F_{K,\sigma}^D$ and $F_{K,\sigma}^A$. 
\section{Diffusive fluxes}
In this section, we discuss the discretisation of the diffusive fluxes. This will be done via the hybrid-mimetic-mixed (HMM) method \cite{dro-10-uni}, for which the diffusive fluxes are based on the bilinear form $a(u,w):=\int_\O \Lambda \nabla u \cdot \nabla w \,\,d\x$, which stems from the weak formulation of the advection-diffusion equation \eqref{eq:model}. In particular, we define the diffusive fluxes such that for $c\in X_\disc$, we have for all $K\in\mesh, v\in X_\disc$, 
\begin{equation}\label{eq:diff_fluxes}
\sum_{\sigma\in\edges_K} |\sigma|F_{K,\sigma}^D (v_K-v_\sigma) = \int_K \Lambda_K \nabla_{\disc} c \cdot \nabla_{\disc} v \,\,d\x, 
\end{equation}
where $\Lambda_K$ is an approximation of $\Lambda$ at cell $K$, and $\nabla_{\disc}$ is a \emph{stabilised discrete gradient}. The discrete gradient is stabilised in the sense that it consists of a consistent term, added to a stabilisation term. The consistent term is linearly exact, and is defined via a cellwise discrete gradient $\ograd_\disc$, such that for all $q\in X_\disc$ and for all $K\in\mesh$, $(\ograd_\disc q)_{|K} = \ograd_{K} q$, where 
\begin{equation}\label{eq:discGrad_expression}
\ograd_{K} q := \frac{1}{|K|}\sum_{\sigma\in\edges_K} |\sigma| (q_\sigma-q_K) \mathbf{n}_{K,\sigma}, \
\end{equation}
and $\mathbf{n}_{K,\sigma}$ is the unit outward normal vector of $\sigma$. In order to ensure the coercivity of the diffusive flux, a stabilisation term needs to be added to the discrete gradient \eqref{eq:discGrad_expression}.  For the HMM method, the stabilisation term is defined such that for all $q\in X_\disc$ and for all $K\in\mesh$, $S_K: X_\disc \rightarrow L^2(K)$ is given by
\begin{equation} \label{eq:stab_K}
S_K (q) := \sum_{\sigma\in\edges_K} S_{K,\sigma} \mathbbm{1}_{D_{K,\sigma}}, 
\end{equation}
where $(D_{K,\sigma})_{\sigma\in\edges_K}$ are convex hulls of $\sigma$ and $\x_K$ (see Figure \ref{fig.notations_2D}), and 
\begin{equation}\label{eq:stab_Ksig}
S_{K,\sigma} := \frac{\sqrt d}{d_{K,\sigma}}[q_{\edge}-q_{\cv}-\ograd_{\cv}q \cdot(\x_\sigma-\centercv)] \mathbf{n}_{K,\sigma}.
\end{equation}
Here, $d_{K,\sigma}$ is the orthogonal distance from $\x_K$ to $\sigma$, and $\x_\sigma$ is the center of the edge $\sigma$.
\begin{figure}[h]
	\centering
	\includegraphics[width=0.3\linewidth]{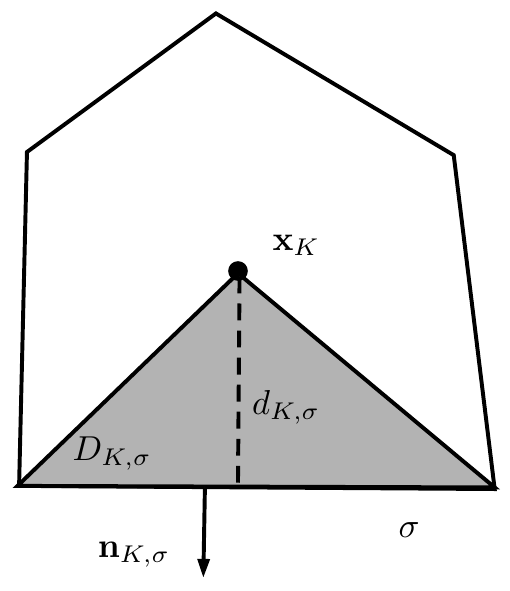} 
	\caption{Notations on cell $K$.}
	\label{fig.notations_2D}
\end{figure}

The stabilised discrete gradient is then defined such that for all $q\in X_\disc$ and for all $K\in\mesh$
\begin{equation}\label{eq:stabGrad}
(\nabla_{\disc} q)_{|K} := \ograd_{K} q + S_K (q).
\end{equation}

One important property of the stabilisation term, which will be needed to establish the coercivity of the scheme, is the following orthogonality condition.
\begin{lemma}[Orthogonality of the stabilisation term.] \label{lem:stab_orth} Let $q\in X_\disc$ and let $K\in\mesh$. Then, the stabilisation $S_K: X_\disc \rightarrow L^2(K)$ defined as in \eqref{eq:stab_K} satisfies the following orthogonality condition.
\begin{equation}\label{eq:stab_ortho}
\int_K S_K q \cdot \phi\,\,d\x = 0 \quad \forall \phi \in \R^d.
\end{equation} 
\end{lemma}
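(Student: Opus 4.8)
The plan is to exploit that $\phi$ is a \emph{constant} vector and that each $\mathbbm{1}_{D_{K,\sigma}}$ is the indicator of the pyramid $D_{K,\sigma}$ with apex $\x_K$ and base $\sigma$, on which $S_{K,\sigma}$ is constant. First I would pull $\phi$ out of the integral in \eqref{eq:stab_ortho} and integrate the indicators, giving
\begin{equation}\nonumber
\int_K S_K q \cdot \phi\,\d\x = \sum_{\sigma\in\edges_K} |D_{K,\sigma}|\, S_{K,\sigma}\cdot\phi = \Big(\sum_{\sigma\in\edges_K} |D_{K,\sigma}|\, S_{K,\sigma}\Big)\cdot\phi.
\end{equation}
Since this must vanish for every $\phi\in\R^d$, it suffices to prove the vector identity $\sum_{\sigma\in\edges_K} |D_{K,\sigma}|\, S_{K,\sigma} = 0$. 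Computing the measure of the pyramid as $|D_{K,\sigma}| = \tfrac{1}{d}|\sigma|\,d_{K,\sigma}$ (base $|\sigma|$, height $d_{K,\sigma}$) and inserting the definition \eqref{eq:stab_Ksig}, the product $|D_{K,\sigma}|\,S_{K,\sigma}$ equals $\tfrac{1}{\sqrt d}\,|\sigma|\,[q_\sigma - q_K - \ograd_K q \cdot (\x_\sigma - \x_K)]\,\mathbf{n}_{K,\sigma}$; discarding the harmless nonzero constant $1/\sqrt d$ reduces the claim to
\begin{equation}\label{eq:plan_reduced}
\sum_{\sigma\in\edges_K} |\sigma|\,\big[q_\sigma - q_K - \ograd_K q \cdot (\x_\sigma - \x_K)\big]\,\mathbf{n}_{K,\sigma} = 0.
\end{equation}

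Next I would split \eqref{eq:plan_reduced} into a consistency part and a correction part. The term carrying $q_\sigma - q_K$ is handled immediately by the definition \eqref{eq:discGrad_expression} of the cellwise gradient, since $\sum_{\sigma\in\edges_K} |\sigma|\,(q_\sigma - q_K)\,\mathbf{n}_{K,\sigma} = |K|\,\ograd_K q$. It then remains to show that the correction term reproduces exactly the same quantity, i.e.
\begin{equation}\label{eq:plan_hard}
\sum_{\sigma\in\edges_K} |\sigma|\,\big(\ograd_K q \cdot (\x_\sigma - \x_K)\big)\,\mathbf{n}_{K,\sigma} = |K|\,\ograd_K q,
\end{equation}
after which the two contributions cancel and \eqref{eq:plan_reduced} follows.

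The hard part is \eqref{eq:plan_hard}, and I expect it to be the only genuinely geometric step. My plan is to recognise it as the statement that the cellwise gradient is \emph{linearly exact}: fixing the constant vector $\mathbf{G} := \ograd_K q$, I would apply the divergence theorem on $K$ to the affine field $\x \mapsto \big(\mathbf{G}\cdot(\x - \x_K)\big)\,\mathbf{e}_i$ for each coordinate $i$. Its divergence is the constant $G_i$, so the volume integral is $|K|\,G_i$, while the boundary integral splits over the faces as $\sum_{\sigma\in\edges_K} \int_\sigma \big(\mathbf{G}\cdot(\x - \x_K)\big)\,n_{K,\sigma,i}\,\dbdr$. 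The crucial point is that the integrand is affine and $\x_\sigma$ is the \emph{barycenter} of $\sigma$, so $\int_\sigma \big(\mathbf{G}\cdot(\x - \x_K)\big)\,\dbdr = |\sigma|\,\big(\mathbf{G}\cdot(\x_\sigma - \x_K)\big)$ holds with no quadrature error; assembling the $d$ coordinates yields precisely \eqref{eq:plan_hard}. The one hypothesis I would make explicit is that $\x_\sigma$ is the centroid of the face (as is standard in the HMM/SUSHI setting), since that is exactly what makes this midpoint-type quadrature exact for affine integrands.
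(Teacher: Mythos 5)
Your proposal is correct and follows essentially the same route as the paper: reduce to the vector identity $\sum_{\sigma\in\edges_K}|D_{K,\sigma}|S_{K,\sigma}=0$ via $|D_{K,\sigma}|=\tfrac{1}{d}|\sigma|d_{K,\sigma}$, handle the $(q_\sigma-q_K)$ part by the definition \eqref{eq:discGrad_expression}, and handle the correction part by the divergence/Green theorem applied to the affine field $\ograd_K q\cdot(\x-\x_K)$, using that $\x_\sigma$ is the face centroid so the midpoint quadrature is exact. (Your prefactor $1/\sqrt d$ is in fact the correct one in general dimension; the paper's $\sqrt d/2$ holds only for $d=2$, though this is immaterial since the constant is nonzero.)
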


A proof of Lemma \ref{lem:stab_orth} can be found in \cite[Chapter 13]{GDMBook16}, but for completeness, we present an alternative proof below.

\begin{proof}
	Since $S_K q$ is piecewise constant with value $S_{K,\sigma}$ on each convex hull $D_{K,\sigma}$, we can write 
	\begin{equation}\nonumber
	\begin{aligned}
	\int_K S_{K} q \cdot \phi \,\,d\x &= \sum_{\sigma\in\edges_K} \int_{D_{K,\sigma}} S_{K,\sigma} \cdot \phi \,\,d\x\\
	&= \sum_{\sigma\in\edges_K} |D_{K,\sigma}| S_{K,\sigma} \cdot \phi.
	\end{aligned}
	\end{equation}
	Now, using the definition of $S_{K,\sigma}$ in \eqref{eq:stab_Ksig} and the geometric relation 
	\begin{equation} \label{eq:area_DKsig} 
	|D_{K,\sigma}| = \frac{1}{d} |\sigma| d_{K,\sigma},
	\end{equation}
	we get 
	\begin{equation}\nonumber
	\begin{aligned}
	\sum_{\sigma\in\edges_K} |D_{K,\sigma}| S_{K,\sigma} \cdot \phi &= \frac{\sqrt{d}}{2} \sum_{\sigma\in\edges_K} |\sigma|[q_{\edge}-q_{\cv}-\ograd_{\cv}q \cdot(\x_\sigma-\centercv)] \mathbf{n}_{K,\sigma} \cdot \phi\\
	&= \frac{\sqrt{d}}{2} \sum_{\sigma\in\edges_K} |\sigma|(q_{\edge}-q_{\cv})\mathbf{n}_{K,\sigma} \cdot \phi\\
	&\,\,-\frac{\sqrt{d}}{2}\sum_{\sigma\in\edges_K}|\sigma|[\ograd_{\cv}q \cdot(\x_\sigma-\centercv)] \mathbf{n}_{K,\sigma} \cdot \phi.
	\end{aligned}
	\end{equation}
    Using \eqref{eq:discGrad_expression}, we obtain
	\begin{equation}\label{eq:stab_orthoT1}
 \sum_{\sigma\in\edges_K} |\sigma|(q_{\edge}-q_{\cv})\mathbf{n}_{K,\sigma} \cdot \phi =  |K| \ograd_{K} q \cdot \phi.
	\end{equation}
	We then note that $\ograd_K q$ is a constant vector, and hence, $\ograd_{K} q \cdot (\x-\x_K)$ is a polynomial of degree one. This means that if $\x_\sigma$ is the edge midpoint of $\sigma$, then 
	\[
	|\sigma|[\ograd_{\cv}q \cdot(\x_\sigma-\centercv)] \mathbf{n}_{K,\sigma} \cdot \phi = \int_\sigma \ograd_{K} q \cdot (\x-\x_K) \mathbf{n}_{K,\sigma} \cdot \phi \,\,ds.
	\]
	Taking the sum over $\sigma\in\edges_K$, using Green's Theorem and the fact that $\phi$ is constant, we then get 
	\begin{equation}\nonumber
	\begin{aligned}
	\sum_{\sigma\in\edges_K} |\sigma|[\ograd_{\cv}q \cdot(\x_\sigma-\centercv)] \mathbf{n}_{K,\sigma} \cdot \phi &= \sum_{\sigma\in\edges_K} \int_\sigma \ograd_{K} q \cdot (\x-\x_K) \mathbf{n}_{K,\sigma} \cdot \phi \,\,ds
	\\ &= \int_K \divg(\phi\ograd_{K} q \cdot (\x-\x_K)) \,\,d\x\\
	&=\int_K \ograd_{K} q \cdot \phi \,\,d\x,
	\end{aligned}
	\end{equation}
 or equivalently,
 \begin{equation}\label{eq:stab_orthoT2}
 \sum_{\sigma\in\edges_K} |\sigma|[\ograd_{\cv}q \cdot(\x_\sigma-\centercv)] \mathbf{n}_{K,\sigma} \cdot \phi	= |K| \ograd_{K} q \cdot \phi.
 \end{equation}
The proof is then concluded by combining the expressions \eqref{eq:stab_orthoT1} and \eqref{eq:stab_orthoT2}.
\end{proof}

%Let $w\in\mathcal{P}^1(K)$ be a linear polynomial. Then, for $p\in H^1(K)$,
%\begin{equation}\nonumber
%\int_K \nabla{p}\cdot \nabla w = \sum_{\edge \in \edgescv} \int_\sigma p \nabla w \cdot \mathbf{n}_{K,\sigma}.
%\end{equation} Now, since $\nabla w$ is a constant vector, we have that for any $p_K\in\mathbb{R}$,
%\[
%\sum_{\edge \in \edgescv} \int_\sigma p_K\nabla w \cdot \mathbf{n}_{K,\sigma}=0,
%\]
%and hence
%\begin{equation}\label{eq:insDiscGrad}
%\int_K \nabla{p}\cdot \nabla w = \sum_{\edge \in \edgescv} \int_\sigma (p-p_K) \nabla w \cdot \mathbf{n}_{K,\sigma}.
%\end{equation} 
%We then reconstruct a discrete gradient that satisfies a discrete version of \eqref{eq:insDiscGrad}. That is, if $q\in X_\disc$, then the discrete gradient $\nabla_{\disc} q_K$ at cell $K$ satisfies 
%\begin{equation} \label{eq:discGradProp}
%\int_K \nabla_{\disc} q_K \cdot \nabla w = \sum_{\edge \in \edgescv} \int_\sigma (q_\sigma-q_K) \nabla w \cdot \mathbf{n}_{K,\sigma}, \quad \forall w\in \mathcal{P}^1(K).
%\end{equation}
%As a remark, we note that this discrete gradient is exact for affine functions. 

\section{Advective fluxes}
\subsection{Cell-centered first and second-order upwind fluxes}
In this section, we discuss the advective fluxes, starting with the standard first order upwind scheme for Cartesian meshes. For each control volume $K$, we assign one discrete unknown $c_K$, which approximates the average value of $c$ at the center $\x_K$ of $K$. Consider now a cell $K$ with eastern edge $\sigma$. Adapting the compass notation, the cells to the west and east of $K$ are denoted by $W, E$ respectively. The cell to the east of $E$ is then denoted by $EE$ (see Figure \ref{fig.SI_Cart}). 

	\begin{figure}[h!]
	\includegraphics[width=0.45\linewidth]{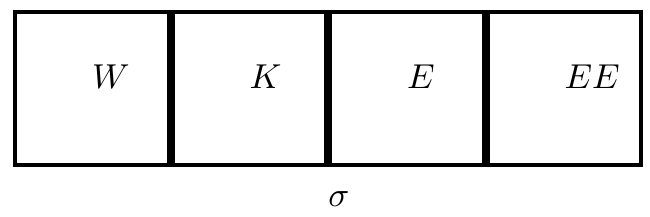} 
	\caption{Cells involved in computing a second-order upwind flux for edge $\sigma$.}
	\label{fig.SI_Cart}
\end{figure}

Denoting by $\x_\sigma$ the midpoint of the edge $\sigma$ and using the midpoint rule for computing the integral $\int_\sigma c\V \cdot \mathbf{n}_{K,\sigma} \,\,ds$, the upwind fluxes are then given by
\begin{equation}\label{eq:1up}
F_{K,\sigma}^A= c_K\V_{K,\sigma}^+ - c_E \V_{K,\sigma}^-,
\end{equation} where $\V_{K,\sigma} = \frac{1}{|\sigma|}\int_\sigma \V \cdot \mathbf{n}_{K,\sigma} \,\,ds$,$\V_{K,\sigma}^+ = \max(\V_{K,\sigma},0)$,$\V_{K,\sigma}^- = \max(-\V_{K,\sigma},0)$, and the value $c_\sigma=c(\x_\sigma)$ is approximated from the upwind direction.

 Essentially, this tells us that if material is flowing out (into) cell $K$ through the edge $\sigma$, corresponding to $\V \cdot \mathbf{n}_{K,\sigma}$ being positive (negative), then the value of $c_\sigma$ is approximated by $c_K$ ($c_E$), which makes sense since the material traveling through $\sigma$ comes from cell $K$ ($E$). From this, we see that the first-order upwind scheme has a very natural physical interpretation, and is fairly simple to implement. Moreover, the formulation \eqref{eq:1up} can straightforwardly be extended onto generic meshes, by treating $E$ as a generic neighboring cell that shares $\sigma$ with $K$. However, the upwind scheme may introduce too much numerical diffusion. Several methods can be employed in order to mitigate this, but for this work, we focus on second-order methods.

	We now discuss the second-order upwind scheme. Again, we look at the eastern edge $\sigma$ of cell $K$. The second-order upwind scheme approximates the advective flux $F_{K,\sigma}$ in the following manner:
\begin{equation}\label{eq:2up}
F_{K,\sigma}^A = 
\bigg(c_K+\frac{1}{2}(c_K-c_W)\bigg)\V_{K,\sigma}^+  -
\bigg(c_E+\frac{1}{2}(c_{E}-c_{EE})\bigg) \V_{K,\sigma}^-.
\end{equation}
Here, we see that the main difference between the second-order upwind flux \eqref{eq:2up} and the first-order upwind flux is that \eqref{eq:1up} approximates the value of $c_\sigma$ by a first-order Taylor expansion centered at $\x_K$ ($\x_E$) when $\V \cdot \mathbf{n}_{K,\sigma}$ is positive (negative), whereas \eqref{eq:2up} gives a second-order Taylor expansion in the same direction. Another thing we notice is that due to the presence of $-c_W$ and $-c_E$ in the second-order upwind flux \eqref{eq:2up}, it is no longer guaranteed that the discrete maximum principle is satisfied. This is typically the case for linear schemes which are second-order or higher. We now note that
\[\frac{1}{2}(c_K-c_W) \approx \frac{\partial c}{\partial x} (\x_K) |\x_\sigma-\x_K| = \nabla c \cdot (\x_\sigma-\x_K),
\]
and similarly
\[\frac{1}{2}(c_{EE}-c_E) \approx \frac{\partial c}{\partial x} (\x_E)|\x_\sigma - \x_E| = \nabla c \cdot (\x_\sigma - \x_E).
\]
Here, the gradient of $c$ is approximated from the upwind direction. To be specific, we see that if $\V_{K,\sigma}>0$, then $\frac{\partial c}{\partial x} (\x_K)$ is approximated by values from cells $K$ and $W$, which are located along the upwind direction. 

To generalise, second-order fluxes are obtained from first-order upwind fluxes by adding a correction term, where the correction term is related to a discrete gradient. In particular, the fluxes $F_{K,\sigma}^A$ for second-order schemes can be written in the following manner:

%To be specific,
%second order schemes take the form  let us 
%now consider the following discrete gradient, which is exact for affine functions. For all $\x\in K$,
%\begin{equation}\label{eq:discGrad}
%\widetilde{\nabla}_{\disc} w(\x) := \frac{1}{|K|}\sum_{\sigma\in\edgescv} |\sigma|w_\sigma \mathbf{n}_{K,\sigma}.
%\end{equation}

\begin{equation}\label{eq:2up_gen}
F_{K,\sigma}^A = 
\bigg(c_K+\widetilde{\nabla}_{\disc} c_K \cdot  (\x_\sigma-\x_K) \bigg)\V_{K,\sigma}^+ -
\bigg(c_E+\widetilde{\nabla}_{\disc} c_E \cdot (\x_\sigma-\x_E)\bigg)\V_{K,\sigma}^-,
\end{equation}
where $\widetilde{\nabla}_{\disc} c_K$ is a discrete gradient which approximates the value of $\nabla c$ at cell $K$.
Different choices on how to reconstruct the discrete gradient $\widetilde{\nabla}_{\disc}c$ then leads to  different schemes. For example, we can take $\widetilde{\nabla}_{\disc} c_K$ to be $\ograd_K c$ defined as in \eqref{eq:discGrad_expression}. For schemes with cell-centered unknowns, the value of $c_\sigma$ in \eqref{eq:discGrad_expression} needs to be chosen. Taking the value of $c_\sigma$ from the upwind direction then leads to the second-order upwind scheme. On the other hand, taking the value of $c_\sigma$ from the downwind direction leads to the second-order centered scheme. To extend the description of the fluxes \eqref{eq:2up_gen} onto non-Cartesian meshes,  we treat $E$ as a generic neighboring cell that shares the edge $\sigma$ with $K$. As with Cartesian meshes, using a second-order cell-centered scheme leads to a much wider stencil. The main point to consider is how the term $\widetilde{\nabla}_{\disc} c_K$ in \eqref{eq:2up_gen} is computed on generic meshes. Here, we use  $\ograd_K c$ defined as in \eqref{eq:discGrad_expression} as our discrete gradient. This gives us 
\begin{equation}\label{eq:discGrad_upwind}
\widetilde{\nabla}_{\disc} c_K = \frac{1}{|K|} \sum_{\edge \in \edgescv} |\sigma|(c_\sigma^{\mathrm{ up }}-c_K) \mathbf{n}_{K,\sigma},
\end{equation}
where the value of $c_\sigma^{\mathrm{ up }}$ is then taken from the upwind direction. That is, we take $c_\sigma^{\mathrm{ up }} = c_K$ if $\V_{K,\sigma}>0$ and $c_\sigma^{\mathrm{ up }}$ from the neighboring cell otherwise. 

As an example, consider a cell $K$ of a triangular mesh with edges $\sigma_1,\sigma_2,\sigma_3$ being shared with cells $E,W,S$ respectively. Taking a velocity field $\V$ such that $\V_{K,\sigma_1}>0$ and $\V_{K,\sigma_i}<0$ for the other edges $\sigma_i, i = 2,3$ of $K$ (see Figure \ref{fig.2dup_tri}), the discrete derivative is then computed to be 
\[
\widetilde{\nabla}_{\disc} c_K = \frac{1}{|K|} \big( |\sigma_2|(c_W-c_K) \mathbf{n}_{K,\sigma_2} + |\sigma_3|(c_S-c_K) \mathbf{n}_{K,\sigma_3}\big).
\]

\begin{figure}[h]
	\centering
	\includegraphics[width=0.5\linewidth]{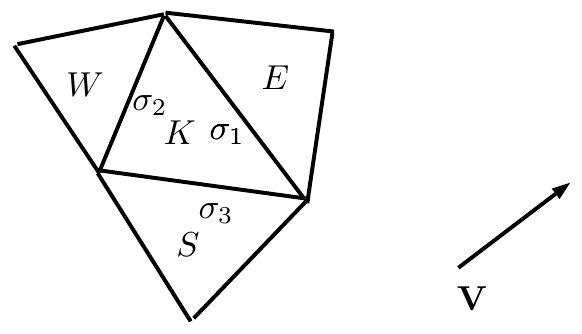} 
		\caption{Cell values needed for constructing $\ograd_K c$ for cell-centered schemes.}
	\label{fig.2dup_tri}
\end{figure}

Since the construction of the advective fluxes $F_{K,\sigma}^A$ on an edge $\sigma$ of cell $K$  in \eqref{eq:2up_gen} requires a linear reconstruction both in cell $K$ and from its neighboring cell, this requires information not only from the neighboring cells of $K$, but also from the neighbors of its neighbors. Moreover, this generalisation of the second-order upwind fluxes onto arbitrary polygonal meshes comes with an additional computational cost/storage. In particular, the sign of $\V \cdot \mathbf{n}_{K,\sigma}$ needs to be computed/stored for each of the edges of cell $K$ and its neighbors in order to determine the value of $c_\sigma^{\mathrm{ up }}$ in computing the discrete gradient \eqref{eq:discGrad_upwind}. 

% On the other hand, the formulation \eqref{eq:2up_gen} for the advective fluxes is slightly different from the classical formulation of the advective fluxes when applied to non-Cartesian meshes. In particular, taking $c_\sigma$ in \eqref{eq:theta_gen} for the discrete gradient $\eqref{eq:discGrad}$ used in the flux \eqref{eq:2up_gen} will lead to a flux which involves, aside from the cell $K$ and $E$, all of the other cells sharing the edges $\sigma'$  for which $\mathbf{n}_{K,\sigma'}$ is not orthogonal to $(\x_\sigma-\x_K)$, whereas in the literature, the $\theta$-scheme takes the form \eqref{eq:theta-scheme}, even on generic meshes.

\subsection{Fully local second-order fluxes}
In this section, we introduce the idea of having a fully local second-order scheme. In order to do so, we employ the concept of hybridisation, as in \cite{AB85-MFEM}. This was done for first order upwind schemes in \cite{VDM11-adv-diff} by the introduction of additional unknowns, one on each face, leading to the hybridised first-order upwind fluxes

\begin{equation}\label{eq:hybrid_firstOrder}
F_{K,\sigma}^A = c_K\V_{K,\sigma}^+ - c_\sigma \V_{K,\sigma}^-.
\end{equation}
 By doing so, the direct dependence of the fluxes on values from neighboring cells is eliminated. The main idea for this section is to extend the hybridised first-order upwind fluxes into second-order fluxes by writing 

\begin{equation} \label{eq:hybrid_flux} 
F_{K,\sigma}^A = \bigg(c_K + \widetilde{\nabla}_{\disc} c_K\cdot (\x-\x_K)\bigg)\V_{K,\sigma}^+ - c_\sigma \V_{K,\sigma}^-.
\end{equation}

One of the main advantages of using a hybridised scheme is, if information is expected to have arrived from another cell, we keep the value of $c$ at $\sigma$ to be implicit. Moreover, for second-order schemes, if the discrete gradient $\widetilde{\nabla}_\disc c$ is taken such that $(\widetilde{\nabla}_\disc c)_{|K}=\ograd_{K} c$ as defined in \eqref{eq:discGrad_expression}, then the values of $c_\sigma$ in the discrete gradient are determined naturally, without having to choose between the values in the upwind and downwind direction. This also allows us to save in terms of the computational cost associated with computing $(\widetilde{\nabla}_\disc c)$.
 
On the other hand, since we introduced additional unknowns along the edges, we also need to introduce additional equations (one corresponding to each edge). Denoting by $N_K$ and $N_e$ denote the number of cells and edges in the mesh, respectively, we then have a scheme with $N_K + N_e$ equations and $N_K + N_e$ unknowns. Using definitions \eqref{eq:diff_fluxes} and \eqref{eq:hybrid_flux} for the diffusive and advective fluxes, and denoting 
\[
F_{K,\sigma} = F_{K,\sigma}^D + F_{K,\sigma}^A,
\] the first $N_k$ equations are given by the balance of fluxes:
\begin{subequations}
	\begin{equation}\label{eq:scheme_fluxbal}
	\sum_{\edge \in \edgescv}|\sigma|F_{K,\sigma}= |K| f_K.
	\end{equation}
	Following this,  we  impose the conservation of fluxes for interior edges. That is, for cells $K$ and $L$ that share a common edge $\sigma$,
	\begin{equation}\label{eq:fluxcons}
	F_{K,\sigma}+F_{L,\sigma} = 0.
	\end{equation}
	Finally, on each boundary edge, an equation is needed for imposing the Dirichlet boundary conditions. In particular, for each $\sigma\in\edges_{\mathrm{ext}}$, we take $c_\sigma$ to be the average value of $g$ over $\sigma$, given by
	\begin{equation}\label{eq:bdcond}
	c_\sigma = \frac{1}{|\sigma|}\int_\sigma g \,\,ds. 
	\end{equation}
\end{subequations}

Note here that the scheme \eqref{eq:scheme_fluxbal}-\eqref{eq:bdcond}  consists of $N_e$ more equations than cell-centered schemes. The main advantage, however, of the hybridised formulation is that the fluxes no longer depend directly on values from neighboring cells, which enables us to use these second-order fluxes even on cells at the boundary of the domain. Moreover, we can employ static condensation for solving the system of equations, which essentially leads to solving only a system of $N_e$ equations in $N_e$ unknowns. 
\section{Convergence of the scheme} \label{sec:Convergence}
In this section, we study the convergence of a family of hybridised second-order finite volume schemes, with diffusive and advective fluxes as in \eqref{eq:diff_fluxes} and \eqref{eq:hybrid_flux}, respectively. Since the diffusive fluxes used here come from the HMM method, the convergence analysis for these terms are more or less the same as those found in \cite{VDM11-adv-diff,dro-10-uni,EGH10-SUSHI}. For our proofs, we focus on the advective fluxes, and how to deal with the correction term. For simplicity of exposition, we assume that the discrete gradient for the correction term in \eqref{eq:hybrid_flux} is taken from \eqref{eq:stabGrad}. A typical norm used for measuring errors is a discrete $L^2$ norm. For this, we start by defining a function reconstruction $\Pi_{\disc_h}: X_{\disc_h} \rightarrow L^2(\O)$, such that for all $q_h\in X_{\disc_h}$
\begin{equation}\label{eq:Pidc}
(\Pi_{\disc_h} q_h)_{|K} = q_K, \quad \forall K\in\mesh_h.
\end{equation} The discrete $L^2$ norm is then defined as
\begin{equation}\label{eq:discL2}
\norm{q_h}{L^2(\O)} := \norm{\Pi_{\disc_h} q_h}{L^2(\O)}=\bigg(\sum_{K\in\mesh_h} |K| q_K^2\bigg)^{1/2}, \quad \forall q_h \in X_{\disc_h}.
\end{equation}  Following the ideas in \cite{VDM11-adv-diff,GDMBook16}, it is also useful to perform analysis for hybridised schemes by using a 
%discrete $H^1_0$-like norm
%\begin{equation}\label{eq:discH10}
%\norm{q_h}{1,D_h} = \bigg(\sum_{K\in\mesh}\sum_{\sigma\in\edges_K}|\sigma|d_{K,\sigma}\bigg(\frac{|q_K-q_{K'}|}{d_\sigma}\bigg)^2\bigg)^{1/2}, \quad \forall q_h\in\mathcal{Q}_h,
%\end{equation}
%where $K'$ is the cell on the other side of $\sigma \in \partial K$ and, to ease notation, we take $q_{K
%	'}$ = 0 if $\sigma$ is a boundary edge.
discrete $H^1$-like norm on $X_{\disc_h}$:
\begin{equation}\label{eq:discH1}
\norm{q_h}{1,\disc_h} = \bigg(\sum_{K\in\mesh_h}\sum_{\sigma\in\edges_K}\frac{|\sigma|}{d_{K,\sigma}}|q_K-q_\sigma|^2\bigg)^{1/2}, \quad \forall q_h \in X_{\disc_h}.
\end{equation}
We start by specifying the following estimate on the discrete gradient $\nabla_{\disc}$.

\begin{lemma} Let $q_h\in X_{\disc_h}$,  then
	\begin{equation}\label{eq:estGrad}
	\norm{\nabla_{\disc_h} q_h}{L^2(\O)} \lesssim \norm{q_h}{1,\disc_h},
	\end{equation}
	where $a\lesssim b$ means that there is a constant $C$, independent of $h$, such that $a\leq Cb$.
\end{lemma}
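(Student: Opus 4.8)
The plan is to bound the two pieces of the stabilised discrete gradient separately, using its structure from \eqref{eq:stabGrad}, namely $(\nabla_\disc q)_{|K} = \ograd_K q + S_K(q)$. Since $L^2(\O) = \bigoplus_K L^2(K)$ and everything is piecewise constant on each cell $K$, I would first write
\begin{equation}\nonumber
\norm{\nabla_{\disc_h} q_h}{L^2(\O)}^2 = \sum_{K\in\mesh_h} \int_K |\ograd_K q + S_K(q)|^2\,\d\x \leq 2\sum_{K\in\mesh_h}\Big( |K|\,|\ograd_K q|^2 + \norm{S_K(q)}{L^2(K)}^2\Big),
\end{equation}
using $|a+b|^2 \le 2|a|^2 + 2|b|^2$, and then estimate each of the two sums by a constant times $\norm{q_h}{1,\disc_h}^2$, with the constant depending only on the mesh-regularity bound from \ref{assum. mesh_reg}.

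\textbf{Consistent term.} For the first sum, I would apply the definition \eqref{eq:discGrad_expression} of $\ograd_K q$. Writing $\ograd_K q = \frac{1}{|K|}\sum_{\sigma\in\edges_K}|\sigma|(q_\sigma - q_K)\ncvedge$ and using $|\ncvedge|=1$, the triangle inequality followed by Cauchy--Schwarz over the $\mathrm{card}(\edges_K)$ edges of $K$ gives
\begin{equation}\nonumber
|K|\,|\ograd_K q|^2 \leq \frac{1}{|K|}\Big(\sum_{\sigma\in\edges_K}|\sigma|\,|q_\sigma-q_K|\Big)^2 \leq \frac{\mathrm{card}(\edges_K)}{|K|}\sum_{\sigma\in\edges_K}|\sigma|^2|q_\sigma-q_K|^2.
\end{equation}
To match the $H^1$-norm weight $\frac{|\sigma|}{d_{K,\sigma}}$ in \eqref{eq:discH1}, I would rewrite $\frac{|\sigma|^2}{|K|} = \frac{|\sigma|}{d_{K,\sigma}}\cdot\frac{|\sigma|d_{K,\sigma}}{|K|}$ and control the extra factor $\frac{|\sigma|d_{K,\sigma}}{|K|}$: by the area relation \eqref{eq:area_DKsig} we have $|\sigma|d_{K,\sigma} = d\,|D_{K,\sigma}| \leq d\,|K|$, so this factor is bounded by $d$. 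Together with the uniform bound on $\mathrm{card}(\edges_K)$ from \ref{assum. mesh_reg}, this yields $\sum_K |K|\,|\ograd_K q|^2 \lesssim \norm{q_h}{1,\disc_h}^2$.

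\textbf{Stabilisation term.} For the second sum I would use $S_K(q) = \sum_\sigma S_{K,\sigma}\carac_{D_{K,\sigma}}$ with the $D_{K,\sigma}$ disjoint, so $\norm{S_K(q)}{L^2(K)}^2 = \sum_{\sigma\in\edges_K}|D_{K,\sigma}|\,|S_{K,\sigma}|^2$. Plugging in \eqref{eq:stab_Ksig} and \eqref{eq:area_DKsig} gives $|D_{K,\sigma}|\,|S_{K,\sigma}|^2 = \frac{|\sigma|}{d_{K,\sigma}}\big|q_\sigma - q_K - \ograd_K q\cdot(\x_\sigma-\x_K)\big|^2$. The leading difference $q_\sigma-q_K$ is already in $H^1$-norm form; the remaining job is to absorb the $\ograd_K q\cdot(\x_\sigma-\x_K)$ piece. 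I would bound $|\x_\sigma-\x_K|\le\diam(K)$, reuse the estimate on $|\ograd_K q|$ from the consistent-term step, and invoke \ref{assum. mesh_reg} to bound $\diam(K)^2|\sigma|/(|K|\,d_{K,\sigma})$-type ratios uniformly in $h$; summing over $\sigma$ and $K$ then gives $\sum_K\norm{S_K(q)}{L^2(K)}^2 \lesssim \norm{q_h}{1,\disc_h}^2$.

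\textbf{The main obstacle} is the stabilisation term: unlike the consistent term, its summand mixes the edge difference $q_\sigma-q_K$ with the cell gradient $\ograd_K q\cdot(\x_\sigma-\x_K)$, and one must verify that every geometric prefactor arising after inserting \eqref{eq:area_DKsig} --- the ratios $\diam(K)^2/(|K|)$ combined with $|\sigma|/d_{K,\sigma}$ and the cardinality of $\edges_K$ --- is controlled \emph{solely} by $\regul(\mesh_h)$ and hence uniformly in $h$. Keeping careful track of these mesh-regularity constants, rather than any analytic subtlety, is where the real care is needed; the rest is Cauchy--Schwarz bookkeeping.
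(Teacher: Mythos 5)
Your proof is correct, and its skeleton is the same as the paper's: split $\nabla_{\disc_h}$ into the consistent part $\ograd_K q_h$ and the stabilisation $S_K(q_h)$, bound each sum by $\norm{q_h}{1,\disc_h}^2$, and absorb the correction $\ograd_K q_h\cdot(\x_\sigma-\x_K)$ inside the stabilisation via $|\x_\sigma-\x_K|\le\diam(K)$, mesh regularity, and the consistent-term estimate. You diverge in two technical places, both in the direction of a more elementary argument. First, at the top level the paper does not use $|a+b|^2\le 2|a|^2+2|b|^2$: it invokes the orthogonality Lemma \ref{lem:stab_orth} with $\phi=\ograd_K q_h$ (constant on $K$), so the cross term vanishes exactly and the splitting is a Pythagorean identity; your Young-inequality split avoids any reliance on Lemma \ref{lem:stab_orth}, at the cost of a factor $2$. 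Second, for the consistent term the paper writes $\int_K|\ograd_K q_h|^2\,\d\x=\sum_{\sigma\in\edges_K}|\sigma|(q_\sigma-q_K)\,\ograd_K q_h\cdot\mathbf{n}_{K,\sigma}$ and applies a weighted Cauchy--Schwarz (weights $|\sigma|/d_{K,\sigma}$ and $|\sigma|d_{K,\sigma}$) together with the identity $\sum_{\sigma\in\edges_K}|\sigma|d_{K,\sigma}=d|K|$, which yields \eqref{eq:gradConsistent} with the clean constant $d$ and no dependence on $\mathrm{card}(\edges_K)$; your cruder Cauchy--Schwarz over the edges is also valid, but drags the cardinality bound of \ref{assum. mesh_reg} into the constant. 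Since the claim is only $\lesssim$, neither difference affects correctness --- the paper's variant simply gives sharper constants, while yours is self-contained. Two small points to make explicit if you write this up: the bound $|\sigma|d_{K,\sigma}=d|D_{K,\sigma}|\le d|K|$ relies on the star-shapedness assumption \ref{assum. mesh_star} (so that $D_{K,\sigma}\subset K$), and the uniform control of $\diam(K)/d_{K,\sigma}$ that your stabilisation estimate needs is exactly the same appeal to \ref{assum. mesh_reg} that the paper makes in its own bound of the term it calls $T_2$, so you are on equal footing there.
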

\begin{proof}
 Using the definition \eqref{eq:stabGrad} of the discrete gradient and the orthogonality of the stabilisation term \eqref{eq:stab_ortho}, we have that 
\begin{equation}\nonumber
\begin{aligned}
\int_\O \nabla_{\disc_h} q_h \cdot \nabla_{\disc_h} q_h \,\,d\x &= \sum_{K\in\mesh_h} \int_K \nabla_{\disc_h} q_h \cdot \nabla_{\disc_h} q_h \,\,d\x\\
&= \sum_{K\in\mesh_h} \int_K \ograd_K q_h \cdot \ograd_K q_h \,\,d\x + \sum_{K\in\mesh_h} \int_K S_k(q_h) \cdot S_K(q_h) \,\,d\x.
\end{aligned}
\end{equation}
We start by looking at the consistent term and use the definition \eqref{eq:discGrad_expression} to obtain
\begin{equation}\nonumber
\begin{aligned} 
\int_\O \ograd_{\disc_h} q_h \cdot \ograd_{\disc_h} q_h \,\,d\x &= \sum_{K\in\mesh_h} \int_K \ograd_K q_h \cdot \ograd_K q_h \,\,d\x\\
&= \sum_{K\in\mesh_h} \sum_{\edge \in \edgescv}|\sigma| (q_\sigma-q_K) \ograd_K q_h\cdot \mathbf{n}_{K,\sigma}.
\end{aligned}
\end{equation}
Applying Cauchy-Schwarz, we then have
\begin{align}
\norm{\ograd_{\disc_h} q_h}{L^2(\O)}^2
 &\leq \sum_{K\in\mesh_h}\bigg(\sum_{\sigma\in\edgescv} \frac{|\sigma|}{d_{K,\sigma}} (q_\sigma-q_K)^2\bigg)^{1/2}\bigg(\sum_{\sigma\in\edgescv} |\sigma| d_{K,\sigma}(\ograd_K q_h\cdot \mathbf{n}_{K,\sigma})^2  \bigg)^{1/2}\nonumber\\
&\leq \bigg(\sum_{K\in\mesh_h}\sum_{\sigma\in\edgescv} \frac{|\sigma|}{d_{K,\sigma}} (q_\sigma-q_K)^2\bigg)^{1/2}\bigg(\sum_{K\in\mesh_h}\sum_{\sigma\in\edgescv} |\sigma| d_{K,\sigma}(\ograd_K q_h \cdot \mathbf{n}_{K,\sigma})^2  \bigg)^{1/2}\nonumber\\
&\leq \norm{q_h}{1,\disc_h}\bigg(\sum_{K\in\mesh_h}\sum_{\sigma\in\edgescv} |\sigma| d_{K,\sigma}(\ograd_K q_h \cdot \mathbf{n}_{K,\sigma})^2  \bigg)^{1/2}\label{ineq:grad}.
\end{align}
Using the fact that 
\[
(\ograd_K q_h \cdot \mathbf{n}_{K,\sigma})^2  \leq \ograd_K q_h \cdot \ograd_K q_h
\]
and
\[\sum_{\edge\in\edgescv}|\sigma|d_{K,\sigma} =d|K|,\] we then have 
\begin{equation}\nonumber
\begin{aligned}
\bigg(\sum_{K\in\mesh_h}\sum_{\sigma\in\edgescv} |\sigma| d_{K,\sigma}(\ograd_K q_h \cdot \mathbf{n}_{K,\sigma})^2  \bigg)^{1/2} &\leq \sqrt{d}\bigg(\sum_{K\in\mesh_h} |K| \ograd_K q_h \cdot \ograd_K q_h \bigg)^{1/2}\\
&= \sqrt{d}\norm{\ograd_{\disc_h} q_h}{L^2(\O)},
\end{aligned}
\end{equation}
which upon substitution to \eqref{ineq:grad}, leads to 
\begin{equation}\label{eq:gradConsistent}
\norm{\ograd_{\disc_h} q_h}{L^2(\O)}^2 \leq d \norm{q_h}{1,\disc_h}^2.
\end{equation}
Now, for the stabilisation term, we have from \eqref{eq:stab_K}, \eqref{eq:stab_Ksig}, and \eqref{eq:area_DKsig} that 
\begin{equation}\nonumber
\begin{aligned}
\sum_{K\in\mesh_h} \int_K S_k(q_h) \cdot S_K(q_h) \,\,d\x &= \sum_{K\in\mesh_h} \sum_{\sigma\in\edges_K} |D_{K,\sigma}| S_{K,\sigma} \cdot S_{K,\sigma}\\
&= d \sum_{K\in\mesh_h} \sum_{\sigma\in\edges_K} \frac{|D_{K,\sigma}|}{d_{K,\sigma}^2} (q_\sigma - q_K - \ograd_{K} q \cdot (\x_\sigma-\x_K))^2\\
&= \sum_{K\in\mesh_h} \sum_{\sigma\in\edges_K} \frac{|\sigma|}{d_{K,\sigma}} (q_\sigma - q_K - \ograd_{K} q \cdot (\x_\sigma-\x_K))^2.\\
\end{aligned}\end{equation}
Using the fact that $(a+b)^2 \leq 2a^2+2b^2$ for any real numbers $a,b$, we then have 
\begin{equation}\nonumber
\begin{aligned}
\sum_{K\in\mesh_h} \int_K S_k(q_h) \cdot S_K(q_h) \,\,d\x &\leq 2  \sum_{K\in\mesh_h} \sum_{\sigma\in\edges_K} \frac{|\sigma|}{d_{K,\sigma}} (q_\sigma - q_K)^2\\
&\,\,+ 2 \sum_{K\in\mesh_h} \sum_{\sigma\in\edges_K} \frac{|\sigma|}{d_{K,\sigma}} (\ograd_{K} q \cdot (\x_\sigma-\x_K))^2.
\end{aligned}
\end{equation}
Now, the first term is simply $\norm{q_h}{1,\disc_h}^2$, so we only need to deal with the second term, which we will denote by $T_2$. Using Cauchy-Schwarz, the regularity of the mesh \ref{assum. mesh_reg}, and \eqref{eq:gradConsistent} we have that 
\begin{equation}\nonumber
\begin{aligned} 
T_2 &\leq 2\sum_{K\in\mesh_h} (\ograd_{K} q \cdot \ograd_{K} q) \sum_{\sigma\in\edges_K} \frac{|\sigma|}{d_{K,\sigma}} \big((\x_\sigma-\x_K) \cdot (\x_\sigma-\x_K)\big) \\
& \lesssim \sum_{K\in\mesh_h} |K|(\ograd_{K} q \cdot \ograd_{K} q)\\
& \lesssim \norm{q_h}{1,\disc_h}^2.
\end{aligned}
\end{equation} 
This allows us to conclude that 
\begin{equation}\label{eq:gradStab}
\sum_{K\in\mesh_h} \int_K S_k(q_h) \cdot S_K(q_h) \,\,d\x\lesssim \norm{q_h}{1,\disc_h}^2.
\end{equation}
Combining the inequalities \eqref{eq:gradConsistent} and \eqref{eq:gradStab} then concludes the proof.
\end{proof}

\subsection{A priori estimates}
\begin{lemma} Let us assume that \ref{assum.Diff}–\ref{assum.Src} hold. Let $\mesh_h$ be an admissible discretization of $\O$ such that
	$\theta > \mathrm{regul}(\mesh_h)$, and let $F_{K,\sigma}^A$ be the advective flux of $q_h \in X_{\disc_h}$ given by \eqref{eq:hybrid_flux} for the velocity
	field $\V \in C^1(\overline{\O})^d$. Then there exists a non-negative
	constant $C_1 \geq 0$ that only depends on $\theta$ and $\V$ such that
	\begin{equation}\label{eq:Lemma-apriori1}
	\begin{aligned}
	&\forall q_h\in X_{\disc_h}: \\
	&\,\,\,\frac{1}{2}\int_\O (\Pi_{\disc_h}q_h)^2 \divg(\V) \,\,d\x\leq \sum_{K\in\mesh_h}\sum_{\sigma\in\edges_K}|\sigma| F_{K,\sigma}^A (q_K-q_\sigma) + C_1 h \norm{q_h}{1,\disc_h}^2.
	\end{aligned}
	\end{equation}
\end{lemma}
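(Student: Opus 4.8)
The plan is to split the advective flux \eqref{eq:hybrid_flux} (evaluated at the edge midpoint, i.e. with $\x_\sigma-\x_K$ in place of $\x-\x_K$) into its hybridised first-order upwind part and the second-order correction,
\[
F_{K,\sigma}^A = \underbrace{\big(q_K\V_{K,\sigma}^+ - q_\sigma\V_{K,\sigma}^-\big)}_{=:\,F_{K,\sigma}^{A,1}} \;+\; \big(\widetilde{\nabla}_{\disc}q_K\cdot(\x_\sigma-\x_K)\big)\V_{K,\sigma}^+,
\]
and to estimate the two resulting sums over $K\in\mesh_h$ and $\sigma\in\edgescv$ separately. The first-order part is expected to reproduce the left-hand side of \eqref{eq:Lemma-apriori1} exactly, while the correction produces the $O(h)$ remainder.

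For the first-order part I would insert $\V_{K,\sigma}^\pm = \tfrac12(|\V_{K,\sigma}|\pm\V_{K,\sigma})$, which gives the pointwise identity
\[
F_{K,\sigma}^{A,1}(q_K-q_\sigma) = \tfrac12|\V_{K,\sigma}|(q_K-q_\sigma)^2 + \tfrac12\V_{K,\sigma}(q_K^2-q_\sigma^2).
\]
Summing, the first term is a non-negative numerical-diffusion contribution that may be discarded. In the second, I collect the $q_K^2$ factors via $\sum_{\sigma\in\edgescv}|\sigma|\V_{K,\sigma}=\int_K\divg(\V)$, which reproduces precisely $\tfrac12\int_\O(\Pi_{\disc_h}q_h)^2\divg(\V)$, while the $q_\sigma^2$ factors cancel on every \emph{interior} edge because $q_\sigma$ is single-valued and $\V_{K,\sigma}+\V_{L,\sigma}=0$ there. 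This single-valuedness is the essential structural feature and mirrors the coercivity of the hybridised first-order flux of \cite{VDM11-adv-diff}; the remaining boundary-edge contributions must be tracked carefully and are controlled with the help of the sign condition $\divg(\V)\ge0$ from \ref{assum.Vel}.

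The core of the argument, and the step I expect to be the main obstacle, is to bound the correction sum
\[
R := \sum_{K\in\mesh_h}\sum_{\sigma\in\edgescv}|\sigma|\big(\widetilde{\nabla}_{\disc}q_K\cdot(\x_\sigma-\x_K)\big)\V_{K,\sigma}^+(q_K-q_\sigma)
\]
by $C_1 h\norm{q_h}{1,\disc_h}^2$, where $\widetilde{\nabla}_{\disc}q_K$ denotes the constant value on $D_{K,\sigma}$ of the stabilised gradient \eqref{eq:stabGrad}. I would use $|\x_\sigma-\x_K|\le\diam(K)\le h$ and $\V_{K,\sigma}^+\le\norm{\V}{L^\infty(\O)}$, then apply Cauchy–Schwarz over $(K,\sigma)$, splitting each summand as $\big(\tfrac{|\sigma|}{d_{K,\sigma}}\big)^{1/2}|q_K-q_\sigma|\cdot(|\sigma|d_{K,\sigma})^{1/2}|\widetilde{\nabla}_{\disc}q_K|$. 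The first factors assemble into $\norm{q_h}{1,\disc_h}$, while for the second I use the volume identity \eqref{eq:area_DKsig}, $|\sigma|d_{K,\sigma}=d|D_{K,\sigma}|$, to recognise $\sum_{K}\sum_{\sigma\in\edgescv}|\sigma|d_{K,\sigma}|\widetilde{\nabla}_{\disc}q_K|^2 = d\norm{\nabla_{\disc_h}q_h}{L^2(\O)}^2$. This yields $|R|\le\sqrt d\,h\,\norm{\V}{L^\infty(\O)}\norm{\nabla_{\disc_h}q_h}{L^2(\O)}\norm{q_h}{1,\disc_h}$, and the preceding lemma \eqref{eq:estGrad} upgrades $\norm{\nabla_{\disc_h}q_h}{L^2(\O)}$ to $\norm{q_h}{1,\disc_h}$, so that $C_1$ depends only on $\V$ through $\norm{\V}{L^\infty(\O)}$ and on $\theta$ through the constant in \eqref{eq:estGrad}.

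Finally, writing $\sum_{K}\sum_{\sigma\in\edgescv}|\sigma|F_{K,\sigma}^A(q_K-q_\sigma)=\sum_{K}\sum_{\sigma\in\edgescv}|\sigma|F_{K,\sigma}^{A,1}(q_K-q_\sigma)+R$ and combining the lower bound for the first-order part with $R\ge-|R|\ge -C_1 h\norm{q_h}{1,\disc_h}^2$ produces \eqref{eq:Lemma-apriori1}. The delicate points I would watch are the bookkeeping of the boundary edges in the first-order identity, and ensuring that every geometric constant entering the estimate for $R$ (in particular $|\x_\sigma-\x_K|\le h$ and the reassembly through \eqref{eq:area_DKsig}) is dominated by the mesh-regularity bound $\theta$, so that the resulting $C_1$ genuinely depends only on $\theta$ and $\V$.
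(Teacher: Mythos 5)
Your proposal follows essentially the same route as the paper's proof: the same splitting of the flux into the hybridised upwind part plus the gradient correction (your identity $\V_{K,\sigma}^{\pm}=\tfrac12(|\V_{K,\sigma}|\pm\V_{K,\sigma})$ is exactly the paper's rewriting \eqref{eq:advFluxV}), the same discarding of the non-negative numerical-diffusion term, the same use of $\sum_{\sigma\in\edgescv}|\sigma|\V_{K,\sigma}=\int_K\divg(\V)$ and of the conservativity of $\V_{K,\sigma}$ to cancel the $q_\sigma^2$ contributions, and the same Cauchy--Schwarz estimate of the correction term combined with \eqref{eq:estGrad}. The only cosmetic differences are the arrangement of the weights in the Cauchy--Schwarz step and your (correct) flagging of the boundary-edge terms, which the paper handles implicitly by working with vanishing edge values on $\edges_{\mathrm{ext}}$ rather than via $\divg(\V)\ge 0$.
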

\begin{proof}
	
We start by taking note that
\begin{equation}\nonumber
\begin{aligned}
\V_{K,\sigma}^+ &= \frac{1}{2}\V_{K,\sigma}+\frac{1}{2}(\V_{K,\sigma}^++\V_{K,\sigma}^-),\\
\V_{K,\sigma}^- &= -\frac{1}{2}\V_{K,\sigma}+\frac{1}{2}(\V_{K,\sigma}^++\V_{K,\sigma}^-),
\end{aligned}
\end{equation}
so that we can write the fluxes \eqref{eq:hybrid_flux} as 
\begin{equation}\label{eq:advFluxV}
F_{K,\sigma}^A  = \frac{1}{2}(q_K+q_\sigma)\V_{K,\sigma} +\frac{1}{2}(\V_{K,\sigma}^++\V_{K,\sigma}^-)(q_K-q_\sigma)+\gradD q_K \cdot (\x_\sigma-\x_K)\V_{K,\sigma}^+.
\end{equation}

Using \eqref{eq:advFluxV} and the fact that $ \V_{K,\sigma}+\V_{K',\sigma}=0$ for two cells $K$ and $K'$ sharing $\sigma$, we then obtain
	\begin{equation}\nonumber
\begin{aligned}
&\!\!\!\sum_{K\in\mesh_h}\sum_{\sigma\in\edges_K}|\sigma|F_{K,\sigma}^A (q_K-q_\sigma) 
\\
&=
\sum_{K\in\mesh_h}\sum_{\edge\in\edgescv} \frac{|\sigma|}{2}(q_K^2-q_\sigma^2) \V_{K,\sigma} +\sum_{K\in\mesh_h}\sum_{\edge\in\edgescv}\frac{|\sigma|}{2}(\V_{K,\sigma}^++\V_{K,\sigma}^-)(q_K-q_\sigma)^2\\
&\,\,\,+ \sum_{K\in\mesh_h}\sum_{\edge\in\edgescv} |\sigma| \gradD q_K \cdot (\x_\sigma-\x_K)(q_K-q_\sigma)\V_{K,\sigma}^+\\
&= \sum_{K\in\mesh_h}\sum_{\edge\in\edgescv} \frac{|\sigma|}{2}q_K^2 \V_{K,\sigma} +\sum_{K\in\mesh_h}\sum_{\edge\in\edgescv}\frac{|\sigma|}{2}(\V_{K,\sigma}^++\V_{K,\sigma}^-)(q_K-q_\sigma)^2\\
&\,\,\,+ \sum_{K\in\mesh_h}\sum_{\edge\in\edgescv} |\sigma| \gradD q_K \cdot (\x_\sigma-\x_K)(q_K-q_\sigma)\V_{K,\sigma}^+\\ 
&\geq \sum_{K\in\mesh_h}\sum_{\edge\in\edgescv} \frac{|\sigma|}{2}q_K^2 \V_{K,\sigma} + \sum_{K\in\mesh_h}\sum_{\edge\in\edgescv} |\sigma| \gradD q_K \cdot (\x_\sigma-\x_K)(q_K-q_\sigma)\V_{K,\sigma}^+.
\end{aligned} 
\end{equation}
Using the fact that 
\[
\sum_{\edge \in \edgescv} |\sigma| \V_{K,\sigma} = \int_K \divg(\V)\,\,d\x,
\]
we then have
	\begin{equation}\label{eq:ineq1}
	\begin{aligned}
	\frac{1}{2}&\int_\O (\Pi_{\disc_h}q_h)^2  \divg(\V) \,\,d\x \\
	&\!\!\! \leq 
\sum_{K\in\mesh_h}\sum_{\sigma\in\edges_K}|\sigma|F_{K,\sigma}^A (q_K-q_\sigma) + \sum_{K\in\mesh_h}\sum_{\edge\in\edgescv} |\sigma| \gradD q_K \cdot (\x_K-\x_\sigma)(q_K-q_\sigma)\V_{K,\sigma}^+ .
\end{aligned}
\end{equation}
Now, upon applying Cauchy-Schwarz, we obtain
\begin{align*}
&\sum_{K\in\mesh_h}\sum_{\edge\in\edgescv} |\sigma| \gradD q_K \cdot (\x_K-\x_\sigma)(q_K-q_\sigma)\V_{K,\sigma}^+\\
&\,\, \leq \sum_{K\in\mesh_h}\bigg(\sum_{\edge\in\edgescv} |\sigma|\bigg(\gradD q_K \cdot (\x_K-\x_\sigma)\V_{K,\sigma}^+\bigg)^2\bigg)^{1/2} \bigg(\sum_{\edge\in\edgescv} |\sigma|(q_K-q_\sigma)^2\bigg)^{1/2} \\
&\,\, \leq \bigg(\sum_{K\in\mesh_h}\sum_{\edge\in\edgescv} \bigg(|\sigma|\gradD q_K \cdot (\x_K-\x_\sigma)\V_{K,\sigma}^+\bigg)^2\bigg)^{1/2}
 \bigg(\sum_{K\in\mesh_h}\sum_{\edge\in\edgescv}|\sigma|(q_K-q_\sigma)^2\bigg)^{1/2}\\
 &\,\, \lesssim \bigg(\sum_{K\in\mesh_h}\sum_{\edge\in\edgescv} |\sigma|(\gradD q_K \cdot \gradD q_K)   \big((\x_K-\x_\sigma) \cdot (\x_K-\x_\sigma)\big) \bigg)^{1/2} h^{1/2}\norm{(q_h,q_{\edges_h})}{1,D_h,\mathcal{E}_h}\\
 &\,\, \lesssim h\norm{\gradD q_h}{L^2(\O)} \norm{q_h}{1,\disc_h},
\end{align*}
which, together with \eqref{eq:estGrad}, leads to 
\begin{equation}\nonumber
\sum_{K\in\mesh_h}\sum_{\edge\in\edgescv} |\sigma| \gradD q_K \cdot (\x_\sigma-\x_K)(q_K-q_\sigma)\V_{K,\sigma}^+ \lesssim h \norm{q_h}{1,\disc_h}^2.
\end{equation}
Substituting the above inequality into \eqref{eq:ineq1} then leads us to \eqref{eq:Lemma-apriori1}.
\end{proof}

\begin{lemma}\label{lem:est_ch}
	Let us assume that \ref{assum.Diff}-\ref{assum.Src} hold. Let $\mesh_h$ be an admissible discretization of $\O$
	such that $\theta> \mathrm{regul}(D_h)$, and let $F_{K,\sigma}^A$ be the advective flux given by \eqref{eq:hybrid_flux} for
	 $\V \in C^1(\O)^d$. Then, for all solutions
	$c_h$ to the scheme \eqref{eq:scheme_fluxbal}-\eqref{eq:bdcond} with diffusive and advective fluxes defined as in \eqref{eq:diff_fluxes} and \eqref{eq:hybrid_flux}, we have
	\begin{equation}\label{eq:lemmaCoercivity}
	\norm{c_h}{1,\disc_h}^2 \lesssim \norm{f}{L^2(\O)}\norm{c_h}{L^2(\O)}+h\norm{c_h}{1,\disc_h}^2.
	\end{equation} 
\end{lemma}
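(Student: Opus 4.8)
The plan is to test the scheme against the discrete solution itself. Concretely, I would multiply the flux balance \eqref{eq:scheme_fluxbal} by $c_K$ and sum over all cells $K\in\mesh_h$, which gives
\[
\sum_{K\in\mesh_h}\sum_{\sigma\in\edges_K}|\sigma|F_{K,\sigma}\,c_K = \sum_{K\in\mesh_h}|K|f_K c_K = \int_\O f\,\Pi_{\disc_h}c_h\,\d\x .
\]
Writing $c_K=(c_K-c_\sigma)+c_\sigma$ and regrouping the second piece edge by edge, the interior edges cancel thanks to the flux conservation \eqref{eq:fluxcons} (each interior $\sigma=K|L$ contributes $|\sigma|c_\sigma(F_{K,\sigma}+F_{L,\sigma})=0$), and the boundary edges drop out once $c_\sigma$ is fixed by \eqref{eq:bdcond} (for the homogeneous data used here, $c_\sigma=0$). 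This leaves the symmetric identity
\[
\sum_{K\in\mesh_h}\sum_{\sigma\in\edges_K}|\sigma|F_{K,\sigma}(c_K-c_\sigma)=\int_\O f\,\Pi_{\disc_h}c_h\,\d\x ,
\]
whose right-hand side is bounded by $\norm{f}{L^2(\O)}\norm{c_h}{L^2(\O)}$ via Cauchy--Schwarz and \eqref{eq:discL2}.

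Next I would split $F_{K,\sigma}=F_{K,\sigma}^D+F_{K,\sigma}^A$ and treat the two sums separately. For the diffusive part, testing the definition \eqref{eq:diff_fluxes} with $v=c$ turns the sum into $\int_\O\Lambda_K\nabla_{\disc}c\cdot\nabla_{\disc}c\,\d\x$, which by \ref{assum.Diff} is $\geq\underline{\lambda}\norm{\nabla_{\disc_h}c_h}{L^2(\O)}^2$; the coercivity of the HMM gradient (the converse of \eqref{eq:estGrad}, obtained by splitting $c_K-c_\sigma$ into the stabilisation part and the consistent part $\ograd_K c\cdot(\x_\sigma-\x_K)$ and using the uniform mesh regularity \ref{assum. mesh_reg}) then yields $\sum_{K,\sigma}|\sigma|F_{K,\sigma}^D(c_K-c_\sigma)\gtrsim\norm{c_h}{1,\disc_h}^2$. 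For the advective part, I would invoke the previous lemma \eqref{eq:Lemma-apriori1} applied to $q_h=c_h$, namely
\[
\sum_{K\in\mesh_h}\sum_{\sigma\in\edges_K}|\sigma|F_{K,\sigma}^A(c_K-c_\sigma)\geq\tfrac12\int_\O(\Pi_{\disc_h}c_h)^2\divg(\V)\,\d\x-C_1 h\norm{c_h}{1,\disc_h}^2 ,
\]
and the decisive sign observation is that $\divg(\V)\geq0$ by \ref{assum.Vel}, so the integral is non-negative and the advective contribution is bounded below by $-C_1 h\norm{c_h}{1,\disc_h}^2$.

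Putting the pieces together, the diffusive lower bound and the advective lower bound combine to
\[
c_{\mathrm{coer}}\norm{c_h}{1,\disc_h}^2 \leq \int_\O f\,\Pi_{\disc_h}c_h\,\d\x + C_1 h\norm{c_h}{1,\disc_h}^2 \leq \norm{f}{L^2(\O)}\norm{c_h}{L^2(\O)} + C_1 h\norm{c_h}{1,\disc_h}^2 ,
\]
and dividing by the coercivity constant $c_{\mathrm{coer}}>0$ yields \eqref{eq:lemmaCoercivity}; note that I deliberately keep, rather than absorb, the $h\norm{c_h}{1,\disc_h}^2$ term, since its absorption (for $h$ small) is a later step. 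I expect the main obstacle to be the diffusive coercivity step, i.e. proving the converse inequality $\norm{c_h}{1,\disc_h}\lesssim\norm{\nabla_{\disc_h}c_h}{L^2(\O)}$: unlike \eqref{eq:estGrad} it is not immediate, as one must control $(c_K-c_\sigma)^2$ by the stabilisation together with the reconstructed gradient, which is exactly where the orthogonality of Lemma \ref{lem:stab_orth} and the regularity \ref{assum. mesh_reg} enter. The remaining bookkeeping, namely the edge cancellation via \eqref{eq:fluxcons} and the vanishing of boundary edges under the homogeneous condition $c_\sigma=0$ from \eqref{eq:bdcond}, is routine once this coercivity is in hand.
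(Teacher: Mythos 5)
Your proposal is correct and follows essentially the same route as the paper: test the flux balance with $c_K$, use the flux conservation \eqref{eq:fluxcons} (and the homogeneous boundary data) to pass to the symmetric form $\sum_{K,\sigma}|\sigma|F_{K,\sigma}(c_K-c_\sigma)$, bound the diffusive part from below by $\norm{c_h}{1,\disc_h}^2$ via the HMM coercivity (the paper delegates exactly this converse of \eqref{eq:estGrad} to \cite[Lemma 13.11]{GDMBook16}), and control the advective part with \eqref{eq:Lemma-apriori1} together with $\divg(\V)\geq 0$. You correctly identify the diffusive coercivity step as the only non-routine ingredient.
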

\begin{proof}
	Take 
	\[
	\int_\O f \Pi_{\disc_h} c_h \,\,d\x= \sum_{K\in\mesh_h} \int_K f c_K \,\,d\x\]
	and let $c_h$ be the solution to the scheme. Then, using the conservation of fluxes, we have
	\begin{equation}\nonumber
	\begin{aligned}
	\int_\O f \Pi_{\disc_h} c_h \,\,d\x&=  \sum_{K\in\mesh_h} \sum_{\edge \in \edgescv} |\sigma|(F_{K,\sigma}^D + F_{K,\sigma}^A) c_K\\
	&=   \sum_{K\in\mesh_h} \sum_{\edge \in \edgescv} |\sigma|F_{K,\sigma}^D(c_K-c_\sigma) + \sum_{K\in\mesh} \sum_{\edge \in \edgescv} |\sigma|F_{K,\sigma}^A(c_K-c_\sigma) .
	\end{aligned}
	\end{equation}
	Applying Cauchy-Schwarz then leads us to 
	\[
	\sum_{K\in\mesh_h} \sum_{\edge \in \edgescv} F_{K,\sigma}^D(c_K-c_\sigma) + \sum_{K\in\mesh_h} \sum_{\edge \in \edgescv} F_{K,\sigma}^A(c_K-c_\sigma)  \leq \norm{f}{L^2(\O)}\norm{c_{h}}{L^2(\O)}.
	\]
Using the definition \eqref{eq:diff_fluxes} for the diffusive fluxes, we have
\[
\sum_{K\in\mesh_h} \sum_{\edge \in \edgescv} |\sigma|F_{K,\sigma}^D(c_K-c_\sigma) = \sum_{K\in\mesh_h} \int_K \Lambda_K \nabla_{\disc_h} c_h \cdot \nabla_{\disc_h} c_h\,\,d\x.
\]
We then use the assumption \ref{assum.Diff} on $\Lambda$ and \cite[Lemma 13.11]{GDMBook16} to establish that
\[ \norm{c_h}{1,\disc_h}^2 \lesssim
\sum_{K\in\mesh_h} \sum_{\edge \in \edgescv} |\sigma|F_{K,\sigma}^D(c_K-c_\sigma).
\]
Combining this with the inequality \eqref{eq:Lemma-apriori1}, and using the fact that $\divg (\V) \geq 0$ then allows us to conclude the proof.
\end{proof}

\subsection{Convergence result}

In this section, we show that the numerical solution of the scheme given by \eqref{eq:scheme_fluxbal}-\eqref{eq:bdcond}, with diffusive and advective fluxes defined as in \eqref{eq:diff_fluxes},\eqref{eq:hybrid_flux} converges to the weak solution of \eqref{eq:model}. For simplicity of exposition, we consider homogeneous Dirichlet boundary conditions. 
\begin{definition}[Weak solution of the advection-diffusion problem \eqref{eq:model}] We say that $c\in H^1(\O)$ is the weak solution of \eqref{eq:model} if for any $\psi \in H^1(\O)$
	\begin{equation}\label{eq:wkForm}
	\int_\O \Lambda \nabla c \cdot \nabla \psi \,\,d\x - \int_\O c\V \cdot \nabla \psi \,\,d\x= \int_\O f \psi \,\,d\x.
	\end{equation}
\end{definition}
\begin{theorem}\label{th:conv}
	Let $c\in H^1(\O)$ be the weak solution to \eqref{eq:model}. Under assumptions \ref{assum.Diff}–\ref{assum.Src},
 let $c_h\in X_{\disc_h}$ be the numerical solution to the scheme \eqref{eq:scheme_fluxbal}-\eqref{eq:bdcond}, with diffusive and advective fluxes constructed as in \eqref{eq:diff_fluxes} and \eqref{eq:hybrid_flux}. Then, for $h \rightarrow 0$, the following hold:
	\begin{enumerate}
		\item $\Pi_{\disc_h} c_h \rightarrow c$ in $L^r(\O)$ for all $r<\frac{2d}{d-2}$ 
		\item $\ograd_{\disc_h} c_h \rightarrow \nabla c$ in $L^2(\O)^d$.
	\end{enumerate}
\end{theorem}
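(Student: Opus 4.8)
The plan is to establish convergence via the now-standard compactness framework for gradient/finite-volume schemes (as in the Gradient Discretisation Method of \cite{GDMBook16}), building on the a priori estimates already proven. The strategy has three phases: (i) derive a uniform bound on $\norm{c_h}{1,\disc_h}$ from the coercivity estimate of Lemma \ref{lem:est_ch}; (ii) use this bound, together with a discrete Sobolev/Rellich-type compactness result, to extract a limit $c$ and show that the reconstructions $\Pi_{\disc_h}c_h$ and $\ograd_{\disc_h}c_h$ converge (up to a subsequence) to $c$ and $\nabla c$; and (iii) pass to the limit in the scheme to identify $c$ as the weak solution, after which uniqueness of the weak solution upgrades subsequential convergence to full convergence.

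\emph{First} I would close the a priori estimate. Lemma \ref{lem:est_ch} gives $\norm{c_h}{1,\disc_h}^2 \lesssim \norm{f}{L^2(\O)}\norm{c_h}{L^2(\O)} + h\norm{c_h}{1,\disc_h}^2$. For $h$ small enough the last term is absorbed into the left-hand side, leaving $\norm{c_h}{1,\disc_h}^2 \lesssim \norm{f}{L^2(\O)}\norm{c_h}{L^2(\O)}$. A discrete Poincar\'e inequality (valid under \ref{assum. mesh_star}--\ref{assum. mesh_reg}, and available in \cite{GDMBook16}) controls $\norm{c_h}{L^2(\O)} \lesssim \norm{c_h}{1,\disc_h}$, so one obtains the uniform bound $\norm{c_h}{1,\disc_h} \lesssim \norm{f}{L^2(\O)}$. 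Combined with the estimate \eqref{eq:estGrad} this also bounds $\norm{\ograd_{\disc_h}c_h}{L^2(\O)}$ uniformly in $h$.

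\emph{Next}, the uniform $H^1$-like bound feeds into the compactness machinery: a discrete Rellich theorem gives, up to a subsequence, strong convergence $\Pi_{\disc_h}c_h \to c$ in $L^2(\O)$ (and, by the discrete Sobolev embedding together with boundedness in the discrete $H^1$ norm, in $L^r(\O)$ for every $r<\frac{2d}{d-2}$), while the consistency of the reconstructed gradient forces the weak limit of $\ograd_{\disc_h}c_h$ to be exactly $\nabla c$, with $c\in H^1_0(\O)$. To identify the limit I would test the scheme against the interpolate of a fixed smooth $\psi$: multiplying the flux balance \eqref{eq:scheme_fluxbal} by $\psi_K$, summing over cells, and using flux conservation \eqref{eq:fluxcons} to reorganise into edge differences, the diffusive part converges to $\int_\O \Lambda\nabla c\cdot\nabla\psi\,\d\x$ by the HMM consistency results cited, and the advective part converges to $-\int_\O c\V\cdot\nabla\psi\,\d\x$, so the limit satisfies the weak formulation \eqref{eq:wkForm}. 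Since the weak solution is unique (a consequence of \ref{assum.Diff} and $\divg\V\ge 0$ via a standard energy argument), the whole sequence converges, not merely a subsequence.

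\textbf{The hard part} will be the limit of the advective term, specifically controlling the second-order correction $\ograd_{\disc_h}c_K\cdot(\x_\sigma-\x_K)\V_{K,\sigma}^+$ in \eqref{eq:hybrid_flux}. Unlike the first-order upwind term, whose consistency is classical, the correction carries a discrete gradient that converges only weakly, so I cannot simply pass to the limit factor-by-factor; I expect to need the $\mathcal O(h)$ smallness already exploited in the a priori estimates (the correction contributes a term bounded by $h\norm{c_h}{1,\disc_h}^2$, hence vanishing as $h\to 0$) to show that the correction does not alter the limit identified by the first-order part. A secondary technical point is verifying the strong $L^2$ consistency of $\V_{K,\sigma}$ as an edge-average of the $C^1$ field $\V$ against the weakly converging reconstruction, which I would handle by writing $\V_{K,\sigma}=\frac{1}{|\sigma|}\int_\sigma\V\cdot\mathbf{n}_{K,\sigma}\,\dbdr$ and using its $\mathcal O(h)$ proximity to the pointwise value at $\x_\sigma$ together with the regularity assumption \ref{assum.Vel}.
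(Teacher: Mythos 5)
Your overall strategy coincides with the paper's: absorb the $h\norm{c_h}{1,\disc_h}^2$ term for small $h$, control $\norm{c_h}{L^2(\O)}$ by $\norm{c_h}{1,\disc_h}$ (the paper uses the discrete Sobolev inequality, Lemma \ref{lem:discSob}, where you invoke a discrete Poincar\'e inequality — interchangeable here), apply the discrete Rellich theorem (Lemma \ref{lem:discRel}) to extract a limit $c\in H^1_0(\O)$, and pass to the limit in the scheme tested against $\phy_K$ for smooth $\phy$, using flux conservation to rewrite everything in terms of $\phy_K-\phy_\sigma$. You also correctly identify the one genuinely new ingredient relative to the first-order analysis: the second-order correction $\nabla_{\disc_h}c_K\cdot(\x_\sigma-\x_K)\V_{K,\sigma}^+$ cannot be passed to the limit factor-by-factor, and is instead killed by its $\mathcal{O}(h)$ size; in the paper this is the term $T_4$, bounded by $h\norm{\nabla\phy}{\infty}\norm{c_h}{1,\disc_h}$ via Cauchy--Schwarz and \eqref{eq:estGrad}, which is exactly the mechanism you describe.

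There is, however, one genuine gap: item (2) of the theorem asserts \emph{strong} convergence $\ograd_{\disc_h}c_h\to\nabla c$ in $L^2(\O)^d$, whereas your argument only delivers weak convergence (Lemma \ref{lem:discRel} gives weak convergence of the gradient, and your limit-identification step uses nothing more). The upgrade from weak to strong convergence requires an additional energy argument — testing the scheme with $c_h$ itself, passing to the limit in the resulting identity to show convergence of $\int_\O\Lambda\nabla_{\disc_h}c_h\cdot\nabla_{\disc_h}c_h\,\d\x$ to $\int_\O\Lambda\nabla c\cdot\nabla c\,\d\x$, and combining norm convergence with weak convergence — which the paper delegates to Step 3 of the proof of Theorem 3.7 in the cited reference. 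Your proposal as written does not address this step, so it proves a strictly weaker conclusion than the statement. A secondary, minor point: your uniform bound $\norm{c_h}{1,\disc_h}\lesssim\norm{f}{L^2(\O)}$ and the uniqueness-based upgrade to full-sequence convergence are both fine, but note that the bound on the correction term in the limit passage is $h\norm{\nabla\phy}{\infty}\norm{c_h}{1,\disc_h}$ (linear in the discrete norm, with the test function supplying the other factor), not $h\norm{c_h}{1,\disc_h}^2$ as in the a priori estimate; the conclusion is unaffected.
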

	\begin{proof}
		Using Lemma \ref{lem:est_ch}, we have $\norm{c_h}{1,\disc_h}^2 \lesssim  \norm{f}{L^2(\O)} \norm{\Pi_{\disc_h}c_h}{L^2(\O)}$ when $h$ is small. In view of Lemma \ref{lem:discSob}, we obtain an upper bound on $\norm{c_h}{1,\disc_h}$. Then the result of Lemma \ref{lem:discRel} implies
		the existence of a function $c \in H^1_0 (\O)$ such that, up to a subsequence, $\Pi_{\disc_h}c_h \rightarrow c$ in $L^r(\O)$ for all
		$r < \frac{2d}{d-2}$ and $\ograd_{\disc_h} c_h \rightarrow \nabla c $ weakly in $L^2(\O)^d$. Now, 
		consider $\phy\in C_c^\infty(\O)$ and for $K\in\mesh_h, \sigma \in \edges_h$ write  $\phy_K = \phy(\x_K), \phy_\sigma = \phy(\x_\sigma)$. Let $c_h \in X_{\disc_h}$ be the numerical solution to \eqref{eq:scheme_fluxbal}-\eqref{eq:bdcond}. 
		We multiply the balance of flux equations \eqref{eq:scheme_fluxbal} by $\phy_K$ and take the sum over $K\in\mesh_h$ to obtain
		\begin{equation}\nonumber
		\begin{aligned}
		\sum_{K\in\mesh} \int_K f \phy_K \,\,d\x&= \sum_{K\in\mesh}\sum_{\edge \in \edgescv} |\sigma|F_{K,\sigma} \phy_K. \\
		\end{aligned}
		\end{equation}
		 Using the conservation of internal fluxes, we then have
		 \begin{equation}\nonumber
		 \begin{aligned}
		 	\int_\O f \phy_h	\,\,d\x &= \sum_{K\in\mesh}\sum_{\edge \in \edgescv} |\sigma|F_{K,\sigma} (\phy_K-\phy_\sigma)\\
		 	&= \sum_{K\in\mesh}\sum_{\edge \in \edgescv} |\sigma|F_{K,\sigma}^D (\phy_K-\phy_\sigma)\\
		 	&\,\, +\sum_{K\in\mesh}\sum_{\edge \in \edgescv} |\sigma|\bigg(\big(c_K + \nabla_{\disc_h} c_K \cdot (\x_\sigma - \x_K)\big)\V_{K,\sigma}^+ - c_\sigma \V_{K,\sigma}^- \bigg) (\phy_K-\phy_\sigma)\\
		 		&= \sum_{K\in\mesh}\sum_{\edge \in \edgescv} |\sigma|F_{K,\sigma}^D (\phy_K-\phy_\sigma)+\sum_{K\in\mesh}\sum_{\edge \in \edgescv} |\sigma|c_K \V_{K,\sigma} (\phy_K-\phy_\sigma)\\
		 	&\,\, +\sum_{K\in\mesh}\sum_{\edge \in \edgescv}|\sigma|(c_K- c_\sigma) \V_{K,\sigma}^-(\phy_K-\phy_\sigma)\\
		 	&\,\, + \sum_{K\in\mesh}\sum_{\edge \in \edgescv} |\sigma|\big(\nabla_{\disc_h} c_K \cdot (\x_\sigma - \x_K)\big)\V_{K,\sigma}^+ (\phy_K-\phy_\sigma).\\
		 \end{aligned}
		 \end{equation}
		 We then write the right hand side of the above equation as $T_1+T_2+T_3+T_4$. 
		 For the term $T_1,$ we write a Taylor expansion 
		 \[
		 \phy_\sigma = \phy_K + \nabla \phy(\x_K) \cdot (\x_\sigma - \x_K) + R_{K,\sigma}(\phy),
		 \]
		 where $R_{K,\sigma}(\phy)\lesssim h^2 \norm{\nabla^2 \phy}{\infty}$. This leads to 
		 \begin{equation}\nonumber
		 \begin{aligned}
		 T_1 &= \sum_{K\in\mesh}\sum_{\edge \in \edgescv} |\sigma|F_{K,\sigma}^D \nabla \phy(\x_K) \cdot (\x_K - \x_\sigma) + \sum_{K\in\mesh}\sum_{\edge \in \edgescv} |\sigma|F_{K,\sigma}^D R_{K,\sigma}(\phy)\\
		 &= T_{1,1}+T_{1,2}.
		 \end{aligned}
		 \end{equation}
		 Using the definition \eqref{eq:diff_fluxes} of the diffusive flux and the orthogonality of the stabilisation term \eqref{eq:stab_ortho}, we then get
		 \begin{equation}\nonumber
		 \begin{aligned}
		 T_{1,1} &= \sum_{K\in\mesh_h} \int_K \nabla_{\disc_h} c_h \cdot \nabla \phy (\x_K)\,\,d\x\\
		 &= \int_\O \ograd_{\disc_h} c_h \cdot (\nabla \phy)_h\,\,d\x,
		 \end{aligned}
		 \end{equation}
		 where $(\nabla \phy)_h$ is defined such that $\big((\nabla \phy)_h\big)_{|K} = \phy(\x_K)$ for each $K\in\mesh_h$.
		 Using the weak convergence of $\ograd_{\disc_h} c_h$ and the strong convergence of $(\nabla \phy)_h$, we have that as $h \rightarrow 0$,
		 \begin{equation}\label{eq:convT11}
		 T_{1,1} \rightarrow \int_\O \Lambda \nabla c \cdot \nabla \phy \,\,d\x.
		 \end{equation}
		 Also, as $h \rightarrow 0$, $T_{1,2} \rightarrow 0$.
		 Now, considering $T_2$, we have that 
		 \begin{equation}\nonumber
		 \begin{aligned}
		 T_2 &= \sum_{K\in\mesh} c_K \phy_K \sum_{\sigma\in\edgescv} |\sigma|\V_{K,\sigma} -\sum_{K\in\mesh} c_K \sum_{\edge \in \edgescv} |\sigma|\V_{K,\sigma} \phy_\sigma\\
		 &= \sum_{K\in\mesh} \int_K c_K\phy_K\divg(\V) \,\,d\x  - \sum_{K\in\mesh} c_K \sum_{\edge \in \edgescv} \int_\sigma \phy \V \cdot \mathbf{n}_{K,\sigma} \,\,ds\\
		 &\,\,
		 + \sum_{K\in\mesh} c_K \sum_{\edge \in \edgescv} \int_\sigma(\phy-\phy_\sigma)\V \cdot \mathbf{n}_{K,\sigma}\,\,ds\\
		 &= \int_\O \Pi_{\disc_h}c_h \phy_h \divg(\V) \,\,d\x- \int_\O\Pi_{\disc_h} c_h \divg(\phy \V)\,\,d\x\\
		 &\,\, +\sum_{K\in\mesh} c_K \sum_{\edge \in \edgescv} \int_\sigma(\phy-\phy_\sigma)\V \cdot \mathbf{n}_{K,\sigma}\,\,ds.
		 \end{aligned}
		 \end{equation}
		 Due to the strong convergence of $\Pi_{\disc_h}c_h$, we see that the first two terms on the right hand side converges to 
		 $\int_\O c\phy \divg(\V) \,\,d\x- \int_\O c \divg (\phy V)\,\,d\x$.
		 We now consider the third term.  Due to the fact that $\V_{K,\sigma}$ is conservative, we may write
		 \begin{equation}\label{eq:argT_23}
		 \begin{aligned}
		 \bigg|\sum_{K\in\mesh} c_K \sum_{\edge \in \edgescv} \int_\sigma(\phy-\phy_\sigma)\V \cdot \mathbf{n}_{K,\sigma}\,\,ds\bigg| &= \bigg|\sum_{K\in\mesh} \sum_{\edge \in \edgescv} (c_K-c_\sigma) \int_\sigma(\phy-\phy_\sigma)\V \cdot \mathbf{n}_{K,\sigma}\,\,ds\bigg|\\
		 &\lesssim h \norm{\nabla \phy}{\infty} \sum_{K\in\mesh} \sum_{\edge \in \edgescv} |\sigma| |c_K-c_\sigma|.
		 \end{aligned}
		 \end{equation}
		 Using Cauchy-Schwarz and the boundedness of $\norm{c_h}{1,\disc_h}$, we have
		 \begin{equation}\nonumber
		 \sum_{K\in\mesh} \sum_{\edge \in \edgescv} |\sigma||c_K-c_\sigma| \leq (d|\O|)^{1/2} \norm{c_h}{1,\disc_h} \lesssim 1.
		 \end{equation}
		 Thus, as $h\rightarrow 0$, we have
		 \begin{equation}\label{eq:convT2}
		 T_2 \rightarrow  \int_\O c\phy \divg(\V)\,\,d\x - \int_\O c \divg (\phy \V)\,\,d\x.
		 \end{equation}
		 For $T_3$, we use the boundedness $|\V_{K,\sigma}| \lesssim 1$ and an argument similar to \eqref{eq:argT_23} in order to establish that as $h\rightarrow 0$,
		 \begin{equation}\label{eq:convT3}
		 T_3 \rightarrow 0.
		 \end{equation}
		 Finally, we consider the term $T_4$.
		 Here, we have
		 \begin{equation}\nonumber
		 \begin{aligned}
		 T_4 \lesssim  \norm{\nabla \phy}{\infty} \sum_{K\in\mesh}\sum_{\edge \in \edgescv} |\sigma|\big| \nabla_{\disc_h} c_K \cdot (\x_\sigma - \x_K)\big|,
		 \end{aligned}
		 \end{equation}
		 By Cauchy-Schwarz, we then have 
		 \begin{equation}\nonumber
		 \begin{aligned}
		 &\sum_{K\in\mesh}\sum_{\edge \in \edgescv} |\sigma|\big| \nabla_{\disc_h} c_K \cdot (\x_\sigma - \x_K)\big|\\
		  &\,\,\,\leq \sum_{K\in\mesh}(\sum_{\edge\in\edgescv} |\sigma||\x_\sigma-\x_K| \nabla_{\disc_h} c_K \cdot \nabla_{\disc_h} c_K)^{1/2}  (\sum_{\edge\in\edgescv} |\sigma| |\x_\sigma-\x_K|)^{1/2} \\
		 &\,\,\,\lesssim h \sum_{K\in\mesh} \int_K \nabla_{\disc_h} c_K \cdot \nabla_{\disc_h} c_K\,\,d\x,
		 \end{aligned}
		 \end{equation}
		 which, together with \eqref{eq:estGrad}, leads to 
		 \begin{equation}\nonumber
		 \begin{aligned}
		 T_4 \lesssim h \norm{\nabla \phy}{\infty} \norm{c_h}{1,\disc_h}.
		 \end{aligned}
		 \end{equation}
		 Hence, as $h\rightarrow 0$,
		 \begin{equation}\label{eq:convT4}
		 T_4 \rightarrow 0.
		 \end{equation}
		 Combining the results in \eqref{eq:convT11}, \eqref{eq:convT2}, \eqref{eq:convT3}, \eqref{eq:convT4} then shows us that indeed the numerical solution converges to the weak solution \eqref{eq:wkForm} of the advection-diffusion problem.
		 
		 Following Step 3 of the proof in \cite[Theorem 3.7]{VDM11-adv-diff}, we can establish that the weak convergence $\ograd_{\disc_h} c_h \rightarrow \nabla c$ is, in fact, strong.
	\end{proof}

We note here that key to the convergence analysis is the property \eqref{eq:estGrad} of the gradient used for the linear term in \eqref{eq:hybrid_flux}. Hence, flux-limited second-order upwind fluxes, such as those in \cite{BM-2dFV}, are also covered by the analysis presented above. In particular, we note that these fluxes take the form
\begin{equation} \label{eq:flux_limited_2dUp}
F_{K,\sigma}^A = \bigg(c_K + \phi_K(c)\widetilde{\nabla}_{\disc} c_K\cdot (\x-\x_K)\bigg)\V_{K,\sigma}^+ - c_\sigma \V_{K,\sigma}^-,
\end{equation}
where $\phi_K: X_\disc \rightarrow [0,1]$.  Since $\phi$ is bounded between $0$ and $1$, $\phi \widetilde{\nabla}_\disc q$ satisfies the estimate \eqref{eq:estGrad} for any $q\in X_\disc$; thus, the convergence results still hold when the advective fluxes \eqref{eq:hybrid_flux} are replaced with \eqref{eq:flux_limited_2dUp}.

\section{Numerical tests} \label{sec:Numtests}
In this section, we present numerical tests of the fully local second-order upwind scheme defined by \eqref{eq:scheme_fluxbal}-\eqref{eq:bdcond} over the domain $\O$, with diffusive and advective fluxes defined as in \eqref{eq:diff_fluxes} and \eqref{eq:hybrid_flux}. This will be compared with the hybridised upwind scheme (with advective fluxes as in \eqref{eq:hybrid_firstOrder}), and the cell-centered second-order upwind scheme (with advective fluxes \eqref{eq:2up_gen}). We note however that for cell-centered schemes, the advective fluxes \eqref{eq:2up_gen} cannot be straightforwardly computed near the boundary of the domain; hence, we switch into a first-order upwind scheme near the boundary of the domain. This will lead to a system with $N_K$ + $N_{e_{\mathrm{ext}}}$ equations and unknowns, where $N_{e_{\mathrm{ext}}}$ is the number of boundary edges. 

\subsection{1D test: $\epsilon$-sensitivity} \label{sec:test_eps}
We start by performing a test over a one dimensional domain $\O=(0,1)$. Here, we check for the $\epsilon$-sensitivity of the schemes. This test checks that the numerical diffusion introduced by the scheme (if any) is not too much. In particular, if the actual solution of the problem contains a thin boundary layer, we also expect the numerical solution to observe the same property. Consider the ODE
\[
c'(x)-\epsilon c''(x) = 0 \quad \mbox{for} \quad x\in(0,1),
\]
with Dirichlet boundary conditions
\[
c(0)=1, c(1)=0.
\]
The solution for this differential equation can be calculated exactly, and is given by 
\[
c(x) = \frac{1}{e^{-1/\epsilon}-1}\bigg(e^{1/\epsilon(x-1)}-1\bigg).
\]

Here, the exact solution $c$ has a boundary layer, which is controlled by the diffusion parameter $\epsilon$. A numerical scheme with a good $\epsilon$-sensitivity should allow us to capture the boundary layer, even when it can only be resolved up to 1 grid cell on the mesh. For this test, we consider a mesh with 100 equidistant cells. Hence, the size of each cell is given to be $h=0.01$. We then consider the diffusion parameters $\epsilon=2^{-4},2^{-6},2^{-8},2^{-10}$. In Figures \ref{fig.2d_eps_sol_up}-\ref{fig.2d_eps_sol_2dhyb}, left, the numerical solutions (dashed lines) are plotted against the exact solution (solid lines) for different values of $\epsilon$. We then plot on the right of Figures \ref{fig.2d_eps_sol_up}-\ref{fig.2d_eps_sol_2dhyb}, the pointwise error values $c(x)-\Pi_{\disc}c(x)$. Since the numerical solutions only deviate from the actual solution near the boundary layer, we zoom in and present the plots for $x\in[0.5,1]$.

\begin{figure}[h]
	\begin{tabular}{cc}
		\includegraphics[width=0.5\linewidth]{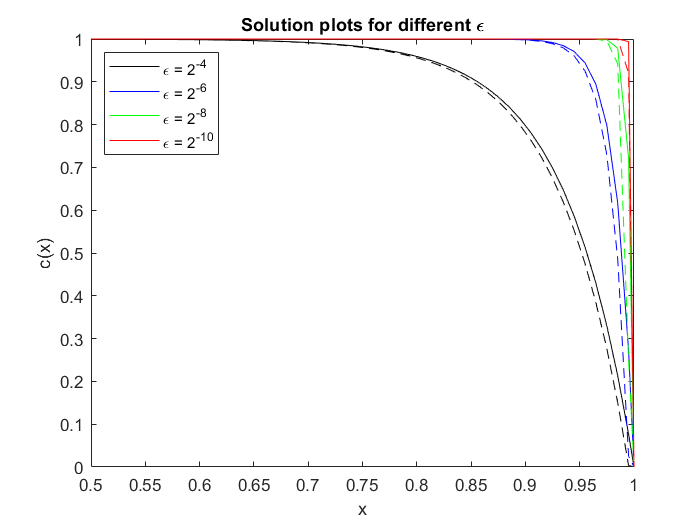} & \includegraphics[width=0.5\linewidth]{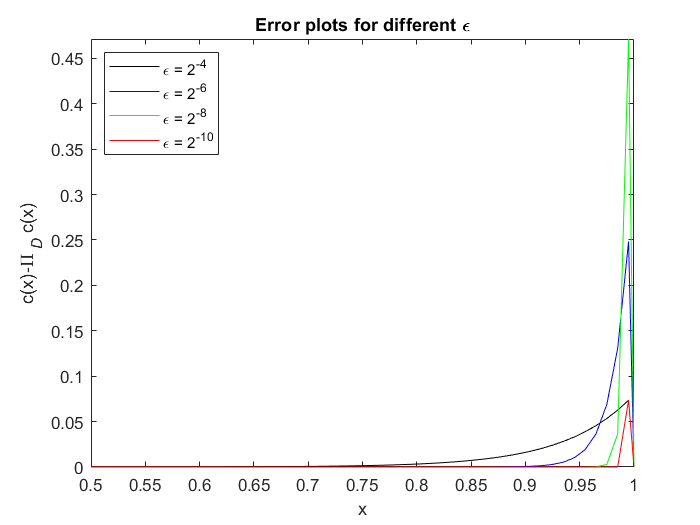} 
	\end{tabular}
	\caption{ Exact solution against numerical solution, hybridised upwind scheme. (left:  solution plots, right: errors).}
	\label{fig.2d_eps_sol_up}
\end{figure}

\begin{figure}[h]
	\begin{tabular}{cc}
		\includegraphics[width=0.5\linewidth]{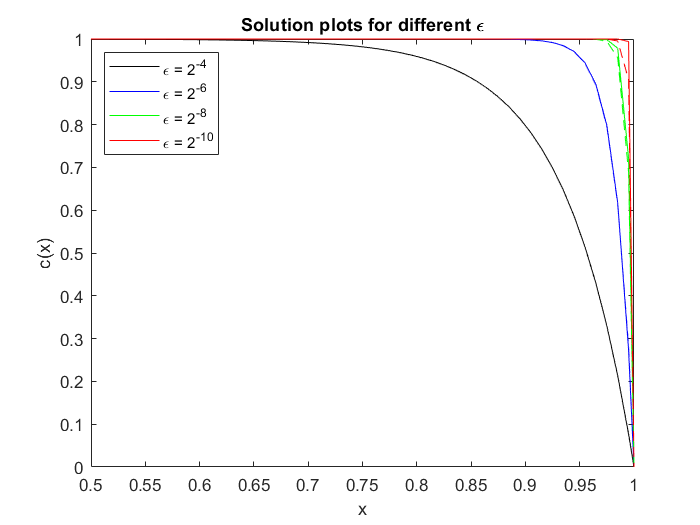} & \includegraphics[width=0.5\linewidth]{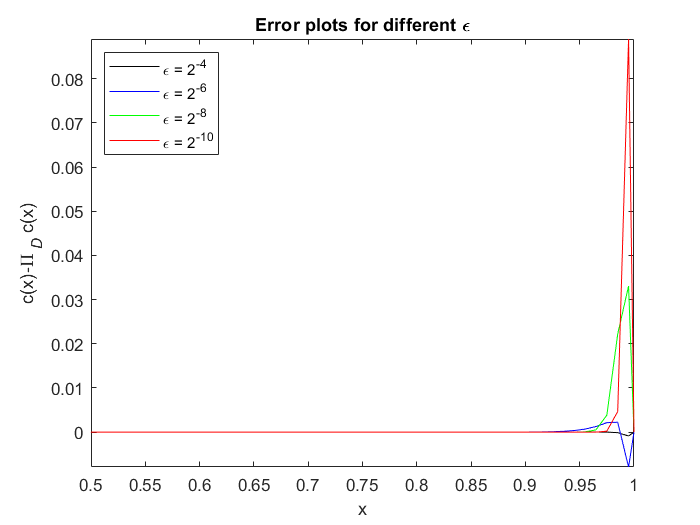} 
	\end{tabular}
	\caption{ Exact solution against numerical solution, cell-centered second-order scheme. (left:  solution plots, right: errors).}
	\label{fig.2d_eps_sol_2dup}
\end{figure}

\begin{figure}[h]
	\begin{tabular}{cc}
		\includegraphics[width=0.5\linewidth]{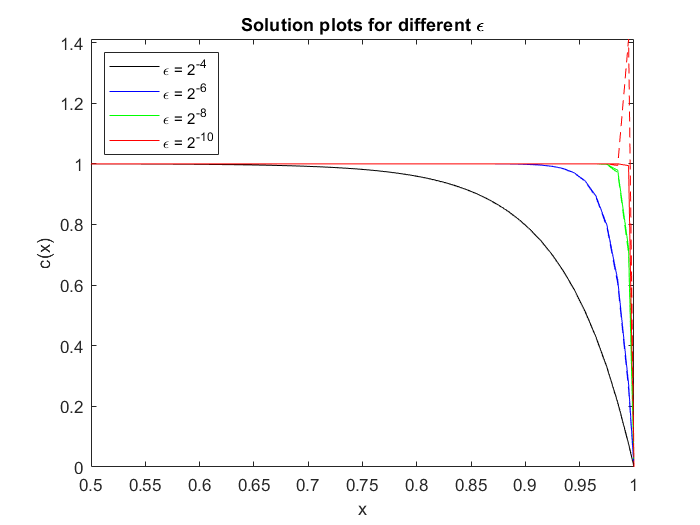} & \includegraphics[width=0.5\linewidth]{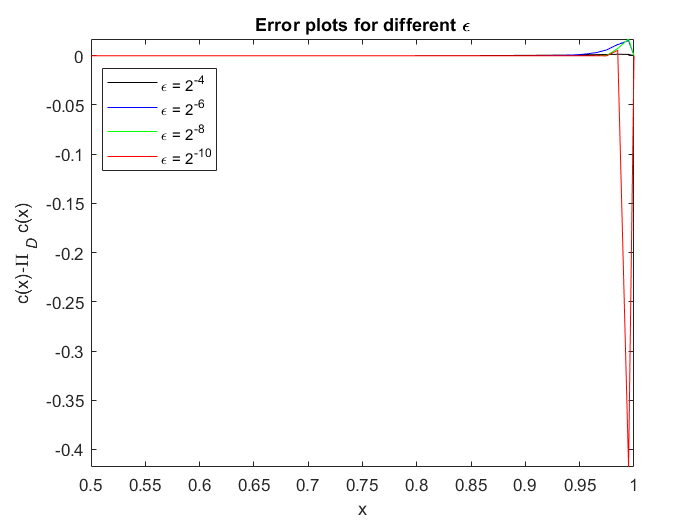} 
	\end{tabular}
	\caption{ Exact solution against numerical solution, hybridised second-order scheme. (left:  solution plots, right: errors).}
	\label{fig.2d_eps_sol_2dhyb}
\end{figure}

We start by looking at the first-order upwind scheme. We note in Figure \ref{fig.2d_eps_sol_up}, right that the quantity $c(x)-\Pi_{\disc}c(x)$ is always nonnegative. This is expected from first-order upwind schemes due to the numerical diffusion it introduces, which leads to the smoothening of the solution and widening of the boundary layer. This is illustrated in Figure \ref{fig.2d_eps_sol_up}, left. We now move on to the cell-centered second-order scheme. Here, we see a better agreement between the numerical solution and the exact solution in the interior of the domain. However, since we switch to a first-order upwind scheme near the boundary of the domain, we see that the solution and error plots in Figure \ref{fig.2d_eps_sol_2dup} are similar to those in Figure \ref{fig.2d_eps_sol_up}. Now, upon looking at the hybridised second-order scheme in Figure \ref{fig.2d_eps_sol_2dhyb}, we see that for $\epsilon \geq 2^{-8}$, the numerical solutions obtained via the hybridised second-order scheme behave in a manner that is very similar to the exact solution. We note however, that for $\epsilon=2^{-10}$, the numerical solution obtained from the hybridised second-order scheme exhibits an overshoot. This is expected, since second-order linear schemes do not guarantee monotonicity and stability of the numerical solution. One way to resolve this is to do, as with the cell-centered scheme, switching into a first-order upwind scheme near the boundary of the domain. However, this will only result to numerical solutions similar to that in Figure \ref{fig.2d_eps_sol_2dup}, and will not be helpful in resolving the boundary layer. Another way to resolve the overshoot is by introducing an artificial vanishing diffusion term. In order to do so, we consider, for the hybridised second-order scheme, a diffusion parameter $\epsilon+h^{1.5}$ instead. This leads to a significant improvement in the numerical results, as can be seen in Figure \ref{fig.2d_eps_sol_2dhyb_vd}. In particular, we are now able to capture the boundary layer, with only small overshoots (less than 5\%) in the numerical solution. 

\begin{figure}[h]
	\begin{tabular}{cc}
		\includegraphics[width=0.5\linewidth]{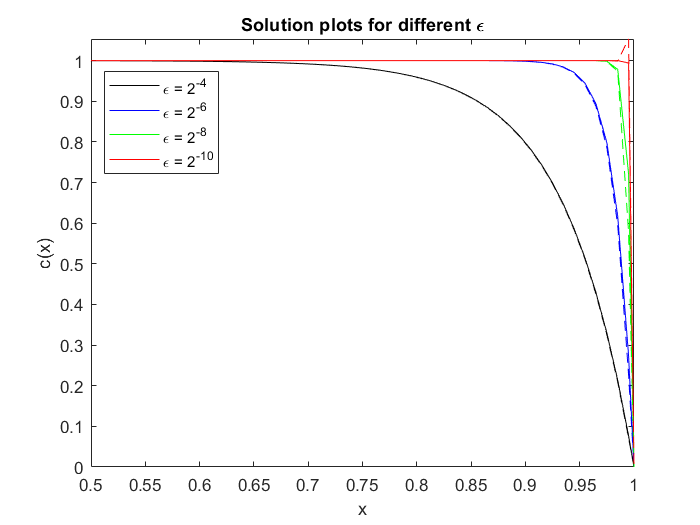} & \includegraphics[width=0.5\linewidth]{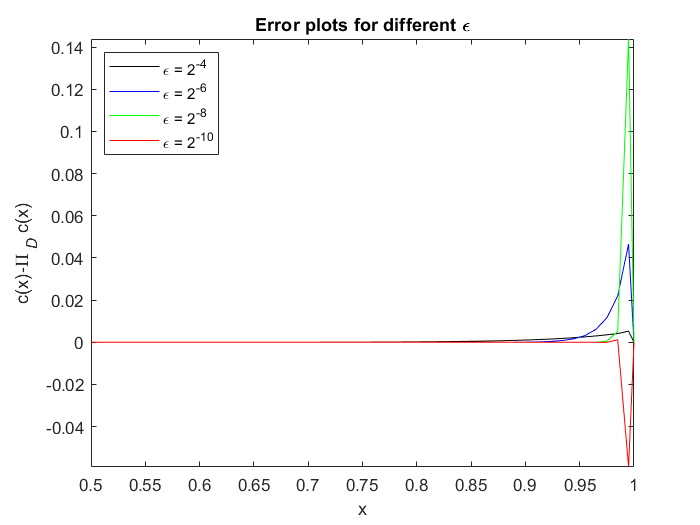} 
	\end{tabular}
	\caption{ Exact solution against numerical solution, hybridised second-order scheme with vanishing diffusion. (left:  solution plots, right: errors).}
	\label{fig.2d_eps_sol_2dhyb_vd}
\end{figure}

To summarise, this test shows us that the numerical diffusion introduced by the hybridised first-order upwind scheme smoothens the solution, leading to a widening of the boundary layer. On the other hand, with the introduction of vanishing diffusion, the numerical solution provided by the hybridised second-order scheme is able to capture the boundary layer properly, with only minimal overshoots.

\subsection{2D tests}
We now proceed to tests in 2D, which will be done on the domain $\O=(0,1)\times(0,1)$. The numerical tests will be done on different mesh types, starting with regular Cartesian and triangular meshes (see Figure \ref{fig:Meshes_1}), followed by distorted meshes, which include moved Cartesian, moved triangular, and Kershaw type meshes \cite{HH08,K98-Kershaw_mesh} (see Figure \ref{fig:Meshes_2}). These mesh types will be denoted by $\mesh_1,\mesh_2,\dots \mesh_5$, respectively. Here, the moved Cartesian and moved triangular meshes in Figure \ref{fig:Meshes_2} are constructed following the guidelines provided in \cite{L10-monotoneFV}. That is, starting with a uniform Cartesian and triangular mesh as in Figure \ref{fig:Meshes_1}, if the maximum diameter of the cells are given by $h$, then the internal nodes $(x,y)$ are perturbed randomly by taking
\[
\hat{x} := x + 0.4\beta_x h, \quad \hat{y} := y +0.4\beta_y h,
\]
where $\beta_x,\beta_y$ are random values between $-0.5$ and $0.5$. 
\begin{figure}[h!]
	\begin{tabular}{cc}
		\includegraphics[width=0.4\linewidth]{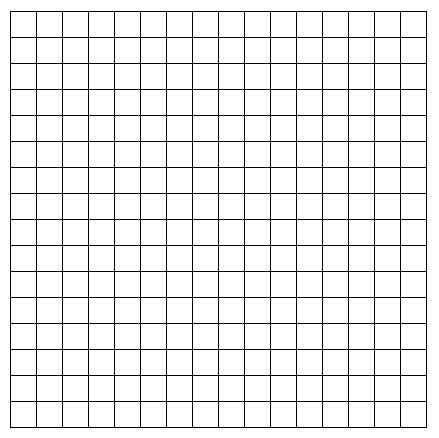} & 	\quad 	\includegraphics[width=0.4\linewidth]{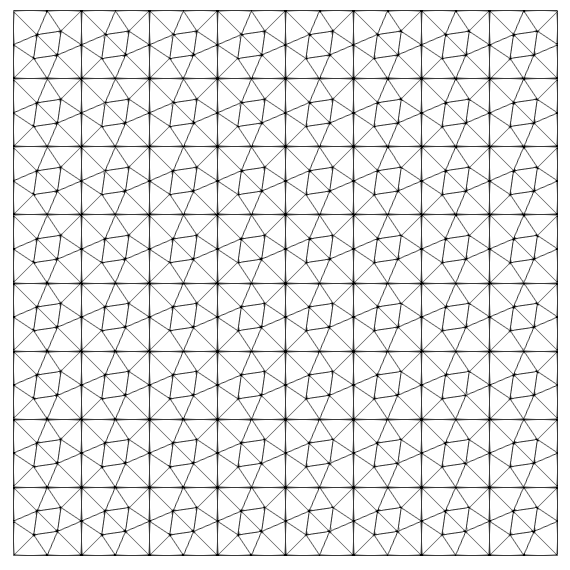}
		
%		 &\quad 
%		\includegraphics[width=0.3\linewidth]{Hexahedral_mesh}
	\end{tabular}
	\caption{Mesh types: $\mesh_1$ Cartesian (left); $\mesh_2$ triangular (right). }
	\label{fig:Meshes_1}
\end{figure}
\begin{figure}[h!]
	\begin{tabular}{ccc}
		\includegraphics[width=0.3\linewidth]{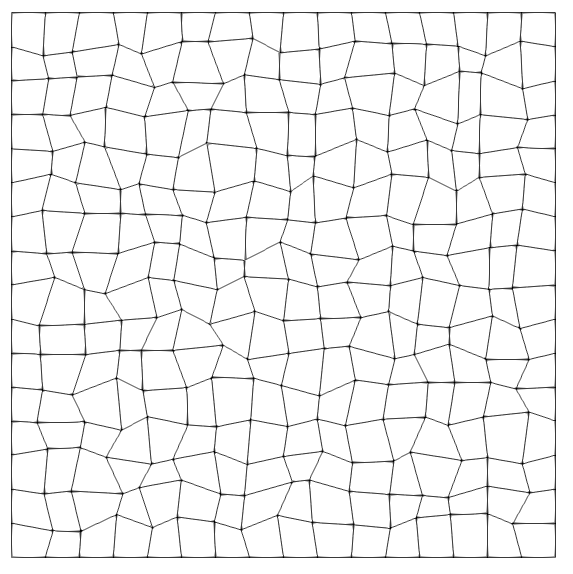} & 	\quad 	\includegraphics[width=0.3\linewidth]{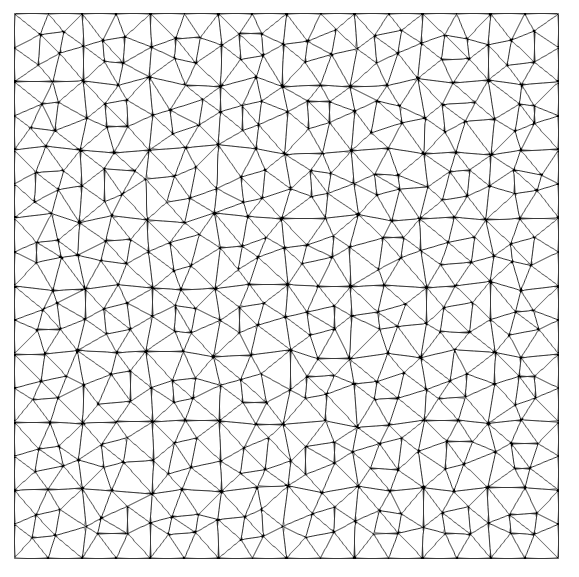} &\quad 
		\includegraphics[width=0.3\linewidth]{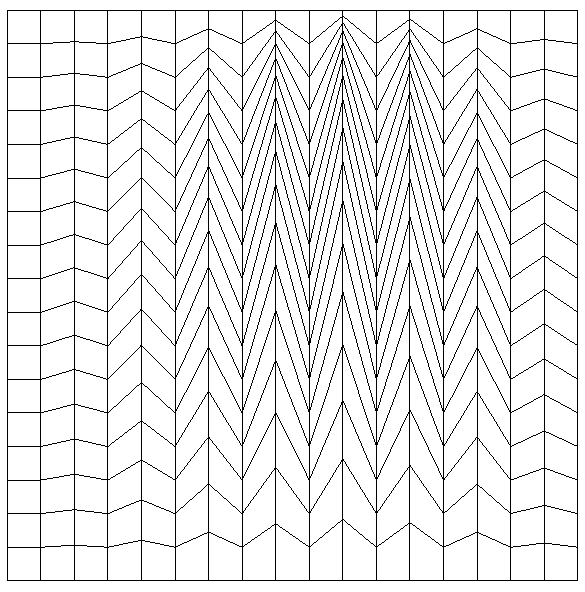}
	\end{tabular}
	\caption{Mesh types: $\mesh_3$  moved Cartesian (left); $\mesh_4$  moved triangular (middle); $\mesh_5$  Kershaw (right). }
	\label{fig:Meshes_2}
\end{figure}

For the convergence tests, we measure the relative solution error
\begin{equation}\nonumber 
E_c:= \dfrac{\norm{\Pi_{\disc_h}c_h - c}{L^2(\O)}}{\norm{c}{L^2(\O)}},
\end{equation}
and the relative error in the discrete gradient
\begin{equation}\nonumber 
E_g := \dfrac{\norm{\nabla_{\disc_h} c_h - \nabla c}{L^2(\O)}}{\norm{c}{H^1(\O)}}.
\end{equation}
In order to have a more detailed comparison, we also give the number of DOFs required to implement each of the schemes for certain mesh types.
\begin{table}[h!]
\caption{Mesh size $h$ and DOFs for hybridised and cell-centered schemes for each refinement level $r$, mesh $\mesh_1$.}\label{tab:DOFs}
\begin{tabular}{cccccc}
	$r$ & $h$ & $N_K$ & $N_e$ & DOFs(hybridised) & DOFs(cell-centered)\\
	\hline
	1&3.535e-01 & 16 & 40 & 56 & 32 \\
	2&1.767e-01 & 64& 144 & 208 & 96 \\
	3&8.838e-02 & 256 & 544 & 800 & 320\\
	4&4.419e-02 & 1024 & 2112 & 3136 & 1152\\
	5&2.209e-02 & 4096& 8320& 12416& 4352\\
	6&1.104e-02 & 16384& 33024 & 49408 & 16896 \\
\end{tabular}

\end{table}

\subsubsection{Convergence test, smooth solution} \label{sec:test_smooth}
We start with performing a convergence test for the advection-diffusion equation \eqref{eq:model} with prescribed solution 
\[c(x,y) = \sin(\pi x) \sin(\pi y).\] Here, we set the velocity field $\V = [1,2]$ and in order to have an anisotropic advection-dominated problem, we set the diffusion tensor 
\[
\Lambda = \begin{bmatrix}
1.5\times10^{-4} & 10^{-6}\\
10^{-6}& 10^{-8} 
\end{bmatrix}.
\]

\begin{figure}[h]
	\begin{tabular}{cc}
\begin{tikzpicture}[scale=0.75]
\begin{loglogaxis}[
xlabel=$h$,
ylabel=$E_c$
%legend pos=north west
]

% USE OF MACRO.
\axispath\draw
(-0.25,-6.0)
--  (-2.25,-10.0)  node[pos=0.6,anchor=south east] {2}
--  (-0.25,-10.0)
-- cycle;
\draw
(-0.25,-8.0)
--  (-2.25,-10.0)  
%        node[pos=0.6,anchor=north east] {1}
--  (-0.25,-10.0)
-- cycle;
%        node[near start,left] {$\frac{dy}{dx} = -1.58$};
%        \addplot plot coordinates {(4.0,0.1) 
%        (6.0,0.2)
%    };
\addplot[mark=o, color=blue] plot coordinates {
	(3.536e-01,   5.192e-02)
	(1.768e-01,   1.287e-02)
	(8.838e-02,   3.208e-03)
	(4.419e-02,   7.997e-04)
	(2.209e-02,  1.989e-04)
	(1.105e-02,  4.934e-05)
%	(5.525e-03,  1.121e-05)
};

\addplot[mark=square, color=blue] plot coordinates {
%	(3.536e-01,   1.592e-01)
%	(1.768e-01,   8.402e-02)
	(8.838e-02,   1.592e-01)
	(4.419e-02,   8.402e-02)
	(2.209e-02,  4.318e-02)
	(1.105e-02,  2.178e-02)
	%	(5.525e-03,  1.121e-05)
};

\addplot[mark=diamond, color=blue] plot coordinates {
	(3.536e-01,   1.956e-01)
	(1.768e-01,   6.991e-02)
	(8.838e-02,   2.251e-02)
	(4.419e-02,   6.339e-03)
	(2.209e-02,  1.847e-03)
	(1.105e-02,  5.903e-04)
	%	(5.525e-03,  1.121e-05)
};

\addplot[mark=o, color=red] plot coordinates {
	(7.243e-01,   5.344e-02)
(3.642e-01,   1.895e-02)
(2.130e-01,   5.641e-03)
(1.002e-01,  1.519e-03)
(5.382e-02,  3.915e-04)
(2.693e-02,  9.901e-05)
};

\addplot[mark=square, color=red] plot coordinates {
%	(7.243e-01,   5.344e-02)
%	(3.642e-01,   1.895e-02)
	(2.130e-01,   1.584e-01)
	(1.002e-01,  8.424e-02)
	(5.382e-02,  4.321e-02)
	(2.693e-02,  2.177e-02)
};
\addplot[mark=diamond, color=red] plot coordinates {
	(7.243e-01,   1.644e-01)
	(3.642e-01,   7.114e-02)
	(2.130e-01,   2.754e-02)
	(1.002e-01,  1.053e-02)
	(5.382e-02,  4.778e-03)
	(2.693e-02,  2.129e-03)
};

%\legend{$\mesh_1$\\$\mesh_2$\\$\mesh_3$\\$\mesh_4$\\$\mesh_5$\\}
\end{loglogaxis}
\end{tikzpicture}
&
\begin{tikzpicture}[scale=0.75]
\begin{loglogaxis}[
xlabel=$h$,
ylabel=$E_g$,
%legend pos=north west
]

% USE OF MACRO.
\axispath
\draw
(-0.85,-9.5)
--  (-2.85,-11.5)  
        node[pos=0.5,anchor=south east] {1}
--  (-0.85,-11.5)
-- cycle;
\draw
(-0.85,-10.5)
--  (-2.85,-11.5)  
%node[pos=0.6,anchor=south east] {2}
--  (-0.85,-11.5)
-- cycle;
%        node[near start,left] {$\frac{dy}{dx} = -1.58$};
%        \addplot plot coordinates {(4.0,0.1) 
%        (6.0,0.2)
%    };
\addplot[mark=o, color=blue] plot coordinates {
	(3.536e-01,   2.691e-04)
	(1.768e-01,   1.354e-04)
	(8.838e-02,   7.340e-05)
	(4.419e-02,   4.245e-05)
	(2.209e-02,  2.601e-05)
	(1.105e-02,  1.652e-05)
	%	(5.525e-03,  1.121e-05)
};

\addplot[mark=square, color=blue] plot coordinates {
%	(3.536e-01,   5.192e-02)
%	(1.768e-01,   2.928e-01)
	(8.838e-02,   2.586e-01)
	(4.419e-02,   1.676e-01)
	(2.209e-02,  1.111e-01)
	(1.105e-02,  7.535e-02)
	%	(5.525e-03,  1.121e-05)
};

\addplot[mark=diamond, color=blue] plot coordinates {
%	(3.536e-01,   5.192e-02)
	(1.768e-01,   2.524e-01)
	(8.838e-02,   1.520e-01)
	(4.419e-02,   9.862e-02)
	(2.209e-02,  6.675e-02)
	(1.105e-02,  4.606e-02)
	%	(5.525e-03,  1.121e-05)
};

\addplot[mark=o, color=red] plot coordinates {
%	(7.243e-01,   4.162e-02)
%	(3.642e-01,   2.924e-02)
	(2.130e-01,   4.270e-02)
	(1.002e-01,  2.235e-02)
	(5.382e-02,  9.725e-03)
	(2.693e-02,  4.322e-03)
};

\addplot[mark=square, color=red] plot coordinates {
%	(7.243e-01,   5.344e-02)
%	(3.642e-01,   1.895e-02)
	(2.130e-01,   3.130e-01)
	(1.002e-01,  2.484e-01)
	(5.382e-02,  2.270e-01)
	(2.693e-02,  2.190e-01)
};
\addplot[mark=diamond, color=red] plot coordinates {
%	(7.243e-01,   5.992e-01)
%	(3.642e-01,   3.505e-01)
	(2.130e-01,   2.655e-01)
	(1.002e-01,  2.130e-01)
	(5.382e-02,  1.972e-01)
	(2.693e-02, 1.844e-01)
};
%\legend{$\mesh_1$\\$\mesh_2$\\$\mesh_3$\\$\mesh_4$\\$\mesh_5$\\}
\end{loglogaxis}
\end{tikzpicture}
\end{tabular}
	\caption{Convergence plot, test \ref{sec:test_smooth}. $\circ-$hybridised second-order, $\square-$hybridised first-order, $\diamond-$cell-centered second-order,  blue: $\mesh_1$, red: $\mesh_3$. The slopes of the triangles are order $h^2$ and $h$ on the left, $h$ and $h^{0.5}$ on the right.} \label{fig.conv_plots1}
\end{figure}
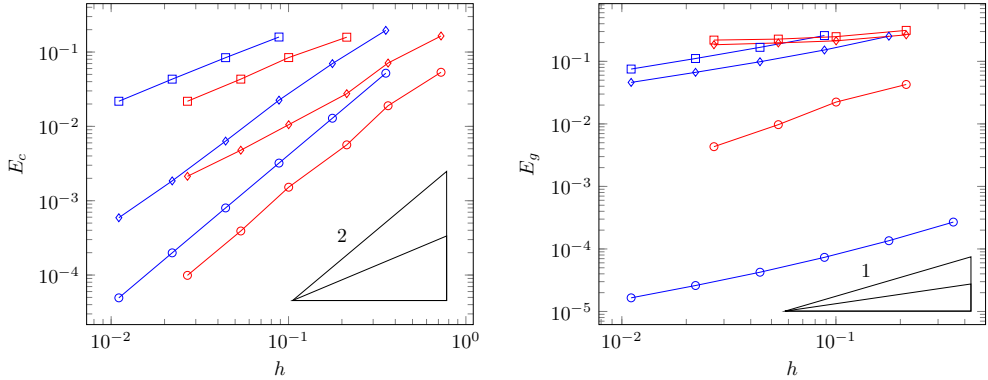

As can be seen in Figures \ref{fig.conv_plots1} and \ref{fig.conv_plots1_2}, the hybridised second-order scheme attains second-order convergence in the solution and first-order convergence in the gradient, except on Cartesian type meshes, where the gradient converges at a rate of approximately $h^{0.7}$. This is due to the fact that the error in the gradient is much smaller than the error in the solution profile; once the error in the solution is smaller than that of the gradient, we expect to observe first-order convergence for the gradient. Firstly, we note that the hybridised second-order scheme is an improvement over the hybridised (first-order) upwind scheme, which has a solution profile that converges with $O(h)$. Moreover, for all mesh types, the solution of the hybridised second-order scheme on the second refinement level is already much better than that of the hybridised upwind scheme on the finest mesh. 

We now look at the second-order cell-centered scheme. We see that in general, this gives an improvement over that of the hybridised first-order scheme, i.e. the second-order cell-centered scheme provides a more accurate solution with the same mesh size $h$. However, no significant improvement is observed on the accuracy in terms of the gradient reconstruction. Upon comparing the cell-centered scheme with the hybridised second-order scheme, we see that the hybridised scheme performs better, both in terms of mesh size $h$ and in terms of the number of DOFs needed to solve the system. In particular, for mesh type $\mesh_1$, the solution of the second-order cell-centered scheme on the finest mesh, which requires solving a system with 16896 unknowns, gives a relative error of 5.903e-04. However, the hybridised second-order scheme already achieves a relative error of 7.997e-04 by solving a system of only 3136 unknowns. A similar observation in terms of the advantages (in DOFs and mesh size) can also be made on the other meshes. We also note here that although the discrete gradient for the cell-centered second-order scheme converges with an order $h^{0.5}$ on $\mesh_1$, no convergence is observed in the other meshes. This can be explained by the fact that for advection-dominated problems, the advective fluxes are the dominant factors, and hence, the conservation of fluxes \eqref{eq:fluxcons} approximately imposes that
\[
F_{K,\sigma}^A + F_{L,\sigma}^A = 0.
\]
This leads to taking $c_\sigma$ from the upwind direction, thus making the discrete gradient \eqref{eq:discGrad_expression} equivalent to \eqref{eq:discGrad_upwind}, which also explains why the error in the gradient $E_g$ for both the hybridised upwind scheme and the cell-centered second-order scheme are quite close to each other.

\begin{figure}[h]
	\begin{tabular}{cc}
		\begin{tikzpicture}[scale=0.75]
		\begin{loglogaxis}[
		xlabel=$h$,
		ylabel=$E_c$
		%legend pos=north west
		]
		
		% USE OF MACRO.
		\axispath\draw
		(-0.75,-5.0)
		--  (-2.75,-9.0)  node[pos=0.6,anchor=south east] {2}
		--  (-0.75,-9.0)
		-- cycle;
		\draw
		(-0.75,-7.0)
		--  (-2.75,-9.0)  
		%        node[pos=0.6,anchor=north east] {1}
		--  (-0.75,-9.0)
		-- cycle;
		%        node[near start,left] {$\frac{dy}{dx} = -1.58$};
		%        \addplot plot coordinates {(4.0,0.1) 
		%        (6.0,0.2)
		%    };
		\addplot[mark=o, color=green] plot coordinates {
			(2.500e-01,   4.139e-02)
			(1.250e-01,   9.195e-03)
			(6.250e-02,   2.198e-03)
			(3.125e-02,   5.355e-04)
			(1.562e-02,  1.268e-04)
			%			(1.105e-02,  1.652e-05)
			%	(5.525e-03,  1.121e-05)
		};

		\addplot[mark=square, color=green] plot coordinates {
			(2.500e-01,   2.393e-01)
			(1.250e-01,   1.268e-01)
			(6.250e-02,   6.445e-02)
			(3.125e-02,   3.246e-02)
			(1.562e-02,  1.614e-02)
			%	(5.525e-03,  1.121e-05)
		};
		
		\addplot[mark=diamond, color=green] plot coordinates {
			(2.500e-01,   1.230e-01)
			(1.250e-01,   5.620e-02)
			(6.250e-02,   2.905e-02)
			(3.125e-02,   1.468e-02)
			(1.562e-02,  7.336e-03)
			%	(5.525e-03,  1.121e-05)
		};
		
		\addplot[mark=o, color=brown] plot coordinates {
			(4.610e-01,   3.627e-02)
			(2.463e-01,   1.108e-02)
			(1.329e-01,   3.016e-03)
			(6.571e-02,  7.155e-04)
			(3.402e-02, 1.872e-04)
		};
		
		\addplot[mark=square, color=brown] plot coordinates {
			(4.610e-01,   2.863e-01)
			(2.463e-01,   1.373e-01)
			(1.329e-01,   6.955e-02)
			(6.571e-02,  3.543e-02)
			(3.402e-02,  1.771e-02)
		};
		
		\addplot[mark=diamond, color=brown] plot coordinates {
			(4.610e-01,   1.468e-01)
			(2.463e-01,   6.183e-02)
			(1.329e-01,   3.138e-02)
			(6.571e-02,  1.561e-02)
			(3.402e-02,  7.950e-03)
		};
		
		\addplot[mark=o, color=black] plot coordinates {
			%	(7.243e-01,   5.344e-02)
			%	(3.642e-01,   1.895e-02)
			(3.287e-01,   5.103e-02)
			(1.665e-01,  1.218e-02)
			(1.115e-01,  5.243e-03)
			(8.385e-02,  2.907e-03)
		};
		\addplot[mark=square, color=black] plot coordinates {
			%	(7.243e-01,   5.344e-02)
			%	(3.642e-01,   1.895e-02)
			(3.287e-01,   3.280e-01)
			(1.665e-01,  2.011e-01)
			(1.115e-01,  1.451e-01)
			(8.385e-02,  1.131e-01)
		};
		
		\addplot[mark=diamond, color=black] plot coordinates {
			%	(7.243e-01,   5.344e-02)
			%	(3.642e-01,   1.895e-02)
			(3.287e-01,   1.430e-01)
			(1.665e-01,  6.313e-02)
			(1.115e-01,  3.548e-02)
			(8.385e-02,  2.219e-02)
		};
		
		%\legend{$\mesh_1$\\$\mesh_2$\\$\mesh_3$\\$\mesh_4$\\$\mesh_5$\\}
		\end{loglogaxis}
		\end{tikzpicture}
		&
		\begin{tikzpicture}[scale=0.75]
		\begin{loglogaxis}[
		xlabel=$h$,
		ylabel=$E_g$,
		%legend pos=north west
		]
		
		% USE OF MACRO.
		\axispath
		\draw
		(-0.75,-3.5)
		--  (-2.25,-5.0)  node[pos=0.5,anchor=south east] {1}
		--  (-0.75,-5.0)
		-- cycle;
		\draw
		(-0.75,-4.25)
		--  (-2.25,-5.0)  
		%        node[pos=0.6,anchor=north east] {1}
		--  (-0.75,-5.0)
		-- cycle;
		%        node[near start,left] {$\frac{dy}{dx} = -1.58$};
		%        \addplot plot coordinates {(4.0,0.1) 
		%        (6.0,0.2)
		%    };
		\addplot[mark=o, color=green] plot coordinates {
			(2.500e-01,   1.571e-01)
			(1.250e-01,   6.125e-02)
			(6.250e-02,   2.802e-02)
			(3.125e-02,   1.351e-02)
			(1.562e-02,  6.330e-03)
			%			(1.105e-02,  1.652e-05)
			%	(5.525e-03,  1.121e-05)
		};

		\addplot[mark=square, color=green] plot coordinates {
			(2.500e-01,   4.272e-01)
			(1.250e-01,   3.207e-01)
			(6.250e-02,   2.742e-01)
			(3.125e-02,   2.547e-01)
			(1.562e-02,  2.457e-01)
			%	(5.525e-03,  1.121e-05)
		};
		
		\addplot[mark=diamond, color=green] plot coordinates {
			(2.500e-01,   3.678e-01)
			(1.250e-01,   2.700e-01)
			(6.250e-02,   2.346e-01)
			(3.125e-02,   2.139e-01)
			(1.562e-02,  2.039e-01)
			%	(5.525e-03,  1.121e-05)
		};
		
		\addplot[mark=o, color=brown] plot coordinates {
			%	(3.536e-01,   5.192e-02)
			(4.610e-01,   1.122e-01)
			(2.463e-01,   6.846e-02)
			(1.329e-01,   3.227e-02)
			(6.571e-02,  1.490e-02)
			(3.402e-02,  7.305e-03)
			%	(5.525e-03,  1.121e-05)
		};
		
		\addplot[mark=square, color=brown] plot coordinates {
			(4.610e-01,   4.598e-01)
			(2.463e-01,   3.746e-01)
			(1.329e-01,   3.419e-01)
			(6.571e-02,  3.316e-01)
			(3.402e-02,  3.189e-01)
		};
		
		\addplot[mark=diamond, color=brown] plot coordinates {
			(4.610e-01,   3.849e-01)
			(2.463e-01,   3.141e-01)
			(1.329e-01,   2.892e-01)
			(6.571e-02,  2.714e-01)
			(3.402e-02,  2.614e-01)
		};
		
		\addplot[mark=o, color=black] plot coordinates {
			%	(7.243e-01,   5.344e-02)
			%	(3.642e-01,   1.895e-02)
			(3.287e-01,   1.216e-01)
			(1.665e-01,  3.396e-02)
			(1.115e-01,  1.616e-02)
			(8.385e-02,  9.811e-03)
		};
		\addplot[mark=square, color=black] plot coordinates {
			%	(7.243e-01,   5.344e-02)
			%	(3.642e-01,   1.895e-02)
			(3.287e-01,   9.770e-01)
			(1.665e-01,  7.565e-01)
			(1.115e-01,  6.320e-01)
			(8.385e-02,  5.515e-01)
		};
		\addplot[mark=diamond, color=black] plot coordinates {
			%	(7.243e-01,   5.344e-02)
			%	(3.642e-01,   1.895e-02)
			(3.287e-01,   9.599e-01)
			(1.665e-01,  6.902e-01)
			(1.115e-01,  5.528e-01)
			(8.385e-02,  4.686e-01)
		};
		%\legend{$\mesh_1$\\$\mesh_2$\\$\mesh_3$\\$\mesh_4$\\$\mesh_5$\\}
		\end{loglogaxis}
		\end{tikzpicture}
	\end{tabular}
	\caption{Convergence plot, test \ref{sec:test_smooth}. $\circ-$hybridised second-order, $\square-$hybridised first-order, $\diamond-$cell-centered second-order, green: $\mesh_2$, brown: $\mesh_4$, black: $\mesh_5$. The slopes of the triangles are order $h^2$ and $h$ on the left, $h$ and $h^{0.5}$ on the right.} \label{fig.conv_plots1_2}
\end{figure}

Upon having a closer look at Figures \ref{fig.conv_plots1} and \ref{fig.conv_plots1_2}, we observe that the second-order cell-centered scheme was able to attain second-order convergence on the regular Cartesian mesh $\mesh_1$, whilst only first-order convergence on the other types of meshes. This can be explained more clearly by looking at the gradient in Figures \ref{fig.conv_plots1} and \ref{fig.conv_plots1_2}, right. Here, we see that the approximate gradient \eqref{eq:discGrad_upwind} converges for Cartesian type meshes, whereas it does not converge on the other types of meshes. Hence, the linear term $\widetilde{\nabla}_\disc c_K \cdot (\x-\x_K)$ added onto the advective flux \eqref{eq:hybrid_flux} is not accurate and thus does not help improve the convergence of the solution. This means that an improvement over the formulation $\eqref{eq:discGrad_upwind}$ of the discrete gradient $\widetilde{\nabla}_\disc c_K$, such as that proposed in \cite{BJ89-upwind-generic_mesh},  would be needed in order to apply a second-order cell-centered scheme on generic meshes.  This is not straightforward to implement; however, the observations made on the Cartesian meshes $\mesh_1$ are sufficient to support the claim that hybridised second-order schemes perform better than cell-centered schemes.

\subsubsection{Convergence test, solution with boundary layers} \label{sec:test_bdLayer}
We now perform a test for a solution with a boundary layer. For this test case, we prescribe an exact solution 
\[
c(x,y) = \bigg(x-e^{\frac{2(x-1)}{\nu}}\bigg)\bigg(y^2-e^{\frac{3(y-1)}{\nu}}\bigg).
\]
Here, we set the velocity field $\V=[2,3]$ and take $\Lambda = \nu \mathbf{I}$, where $\mathbf{I}$ is the identity matrix. Setting $\nu=10^{-4}$ leads to an advection-dominated problem for which the solution is characterized by a boundary layer near the top and right side of the domain. As in \cite{VDM11-adv-diff,L10-monotoneFV,MR08-FV_advection_diffusion}, the goal of our numerical tests is to demonstrate that the scheme has good convergence properties and produces numerical solutions without oscillations in a subdomain outside the boundary layer. Hence, we measure only the errors in the subdomain $[0,0.8] \times [0,0.8]$. 

\begin{figure}[h]
	\begin{tabular}{cc}
	\begin{tikzpicture}[scale=0.75]
	\begin{loglogaxis}[
	xlabel=$h$,
	ylabel=$E_c$,
	]
	
	% USE OF MACRO.
	\axispath\draw
	(-1.00,-7.5)
	--  (-3.00,-11.5)  node[pos=0.6,anchor=south east] {2}
	--  (-1.00,-11.5)
	-- cycle;
	\draw
	(-1.00,-9.5)
	--  (-3.00,-11.5)  
	%        node[pos=0.6,anchor=north east] {1}
	--  (-1.00,-11.5)
	-- cycle;
	%        node[near start,left] {$\frac{dy}{dx} = -1.58$};
	%        \addplot plot coordinates {(4.0,0.1) 
	%        (6.0,0.2)
	%    };
	\addplot[mark = o, color = blue] plot coordinates {
		(8.838e-02,   8.734e-04)
		(4.419e-02,   2.174e-04)
		(2.209e-02,  5.184e-05)
		(1.105e-02,  1.274e-05)
		%	(5.525e-03,  1.121e-05)
	};
	
	\addplot[mark = o, color = green] plot coordinates {
		(1.250e-01,    2.274e-03)
		(6.250e-02,   2.968e-04)
		(3.125e-02,   7.207e-05)
		(1.562e-02,  1.802e-05)
	};
	
	\addplot[mark = o, color = red] plot coordinates {

		(2.130e-01,   1.559e-03)
		(1.002e-01,  3.826e-04)
		(5.382e-02,  1.045e-04)
		(2.693e-02,  2.862e-05)
	};
	
	\addplot[mark = o , color = brown] plot coordinates {
		(2.463e-01,    2.1835e-03)
		(1.329e-01,   4.1546e-04)
		(6.571e-02,   1.0834e-04)
		(3.402e-02,  3.035e-05)
	};
	
	\addplot[mark = o, color = black] plot coordinates {
		(3.287e-01,     1.760e-02)
		(1.665e-01,    4.231e-03)
		(1.115e-01,   1.870e-03)
		(8.385e-02,   1.037e-03)
	};
	\end{loglogaxis}
	\end{tikzpicture} &
	
		\begin{tikzpicture}[scale=0.75]
	\begin{loglogaxis}[
	xlabel=$h$,
	ylabel=$E_g$
	]
	
	% USE OF MACRO.
	\axispath\draw
	(-1.00,-9.5)
	--  (-3.00,-11.5)  node[pos=0.5,anchor=south east] {1}
	--  (-1.00,-11.5)
	-- cycle;
	\draw
	(-1.00,-10.5)
	--  (-3.00,-11.5)  
	%        node[pos=0.6,anchor=north east] {1}
	--  (-1.00,-11.5)
	-- cycle;
	%        node[near start,left] {$\frac{dy}{dx} = -1.58$};
	%        \addplot plot coordinates {(4.0,0.1) 
	%        (6.0,0.2)
	%    };
	\addplot[mark = o, color = blue] plot coordinates {
		(8.838e-02,   7.237e-04)
		(4.419e-02,   1.808e-04)
		(2.209e-02,  4.404e-05)
		(1.105e-02,  1.083e-05)
		%	(5.525e-03,  1.121e-05)
	};
	
	\addplot[mark = o, color = green] plot coordinates {
		(1.250e-01,    3.312e-02)
		(6.250e-02,   6.341e-03)
		(3.125e-02,   3.171e-03)
		(1.562e-02,  1.584e-03)
	};
	
	\addplot[mark = o, color = red] plot coordinates {
		
		(2.130e-01,   1.184e-02)
		(1.002e-01,  7.363e-03)
		(5.382e-02,  3.929e-03)
		(2.693e-02,  2.058e-03)
	};
	
	\addplot[mark = o, color = brown] plot coordinates {
		(2.463e-01,    4.095e-02)
		(1.329e-01,   7.055e-03)
		(6.571e-02,   3.550e-03)
		(3.402e-02,  1.851e-03)
	};
	
	\addplot[mark = o, color = black] plot coordinates {
		(3.287e-01,     6.521e-02)
		(1.665e-01,    5.244e-03)
		(1.115e-01,   2.277e-03)
		(8.385e-02,   1.233e-03)
	};

	\end{loglogaxis}
	\end{tikzpicture}
\end{tabular}
	\caption{Convergence plot for the hybridised second-order scheme on different mesh types (blue: $\mesh_1$, green: $\mesh_2$, red: $\mesh_3$, brown: $\mesh_4$, black: $\mesh_5$), test \ref{sec:test_bdLayer}. The slopes of the triangles are order $h^2$ and $h$ on the left, $h$ and $h^{0.5}$ on the right.} \label{fig.conv_plots2}
\end{figure}
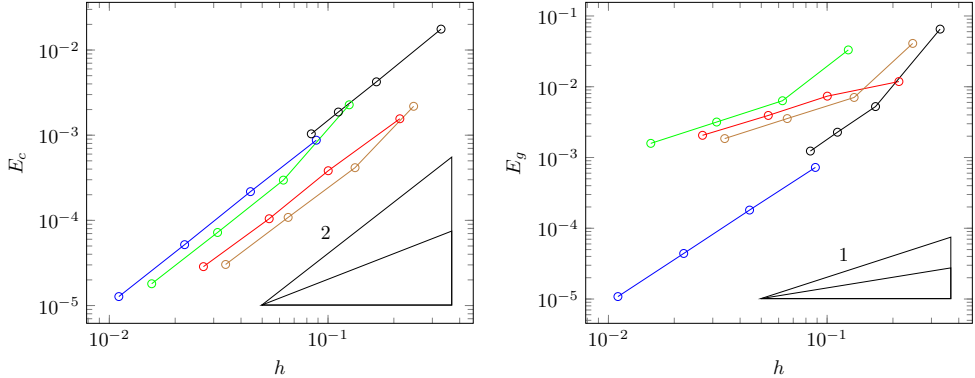

As can be seen in Figure \ref{fig.conv_plots2}, the hybridised second-order scheme provides numerical solutions that are second-order convergent, and gradients that are first-order convergent, regardless of the mesh. Comparison with the hybridised upwind scheme and the cell-centered second-order scheme yielded similar results as test \ref{sec:test_smooth}, and are no longer presented here.

We now study the shock-capturing behavior of the hybridised second-order scheme by plotting the numerical solution in Figure \ref{fig.oscillating_sol}. We only plot on mesh types $\mesh_1$ and $\mesh_3$ (one regular, one irregular), as the numerical solution on other types of meshes exhibit a similar behavior. 

\begin{figure}[h]
	\begin{tabular}{cc}
		\includegraphics[width=0.45\linewidth]{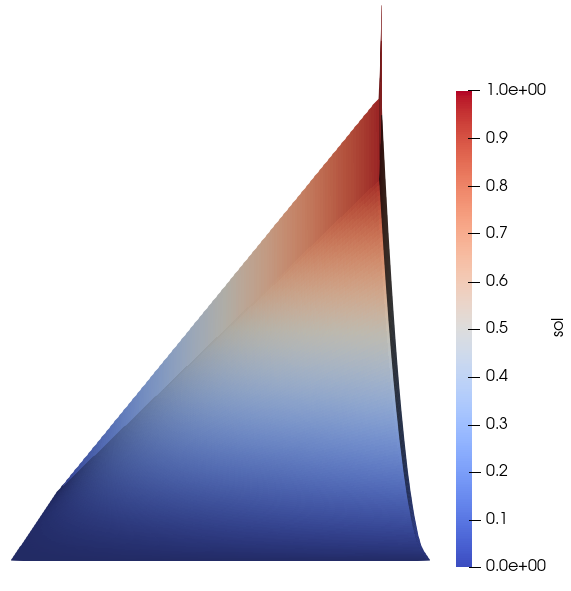} & \includegraphics[width=0.45\linewidth]{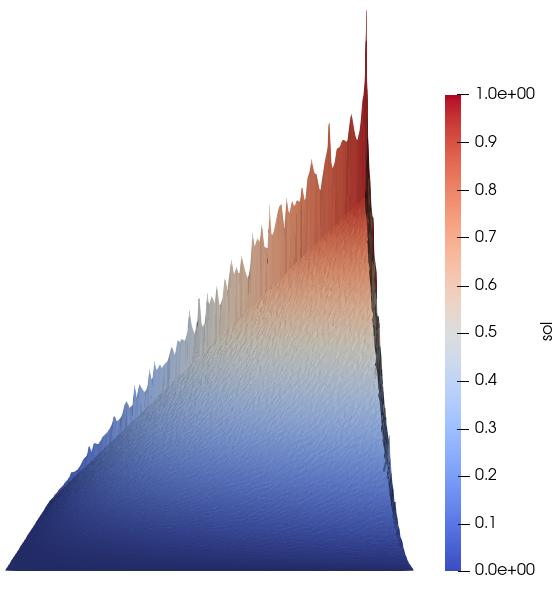} 
	\end{tabular}
	\caption{Solution profile, hybridised second-order scheme, test \ref{sec:test_bdLayer}. (left:  $\mesh_1$, right:  $\mesh_3$).}
	\label{fig.oscillating_sol}
\end{figure}

As can be seen, non-physical oscillations develop at the boundary of the domain; this is expected for second-order schemes which are not flux-limited or total variation diminishing. Moreover, these oscillations are worse on distorted meshes than on regular meshes. In comparison, the hybridised first-order upwind scheme provides solutions which are bounded between 0 and 1, without any non-physical oscillations (see Figure \ref{fig.upwind_sol}). 

\begin{figure}[h]
	\begin{tabular}{cc}
		\includegraphics[width=0.45\linewidth]{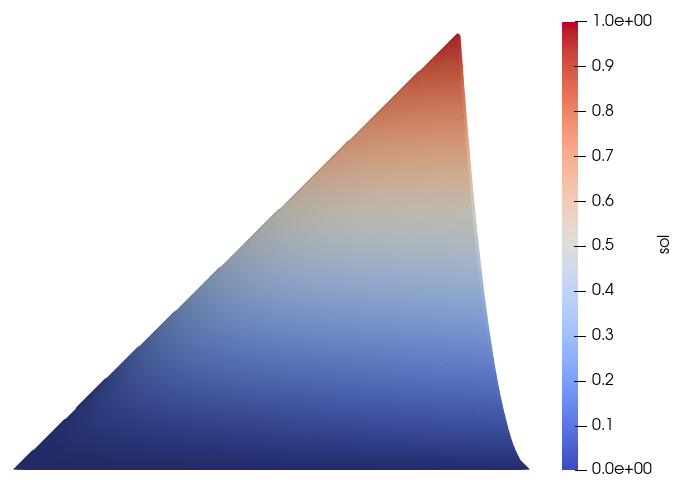} & \includegraphics[width=0.45\linewidth]{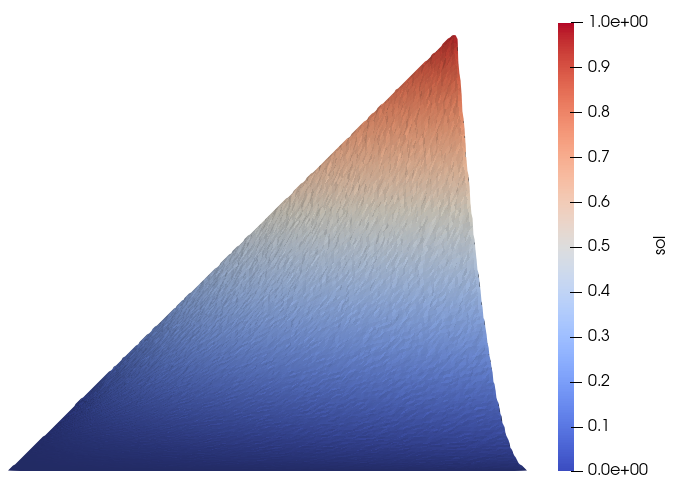} 
	\end{tabular}
	\caption{Solution profile, hybridised upwind scheme, test \ref{sec:test_bdLayer}. (left:  $\mesh_1$, right:  $\mesh_3$).}
	\label{fig.upwind_sol}
\end{figure}

In order to mitigate the non-physical oscillations for the hybridised second-order schemes, we want, as in test \ref{sec:test_eps}, to introduce an artificial vanishing diffusion term. To this end, employ an idea that is similar to that in \cite{DEPT19-vanishing_diff}. That is, if $\V_K$ is an approximation of $\V$ at cell $K$ and if $\Lambda_K$ is an approximation of $\Lambda$ at cell $K$ with diagonalisation $\Lambda_K = U_K' D_K U_K$, we consider 
\begin{equation}\label{eq:mod_diff}
\widetilde{\Lambda}_K = U_K' (D_K+|\V_K|h^{1.5}) U_K,
\end{equation}
and use $\widetilde{\Lambda}_K$ instead in the definition of the diffusive fluxes \eqref{eq:diff_fluxes}.  As can be seen in Figure \ref{fig.vd_sol}, introducing a vanishing diffusion term and using the modified diffusion tensor \eqref{eq:mod_diff}  allows us to mitigate the non-physical oscillations. Moreover, the solutions here are now bounded between 0 and 1. We note, however, that since the artificial diffusion vanishes at a rate of $h^{1.5}$, our scheme reduces to order 1.5; this is not the optimal order 2 convergence, but still offers an improvement over the hybridised first-order scheme. Alternatively, the use of nonlinear flux-limited schemes, as in \cite{BM-2dFV,MR08-FV_advection_diffusion} can also mitigate the non-physical oscillations, whilst preserving second-order accuracy. 

\begin{figure}[h]
	\begin{tabular}{cc}
		\includegraphics[width=0.45\linewidth]{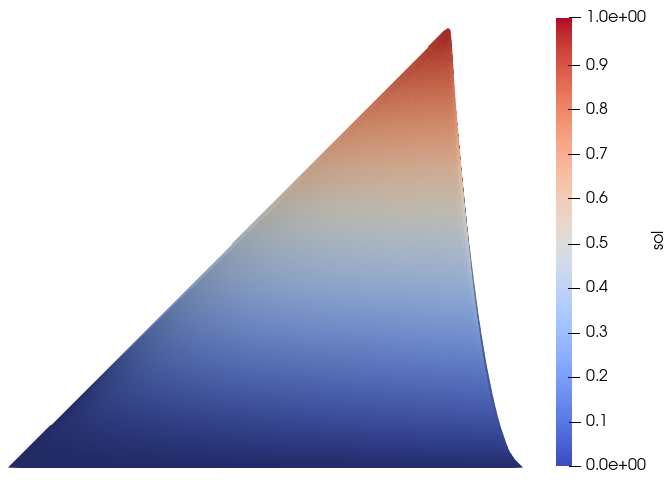} & \includegraphics[width=0.45\linewidth]{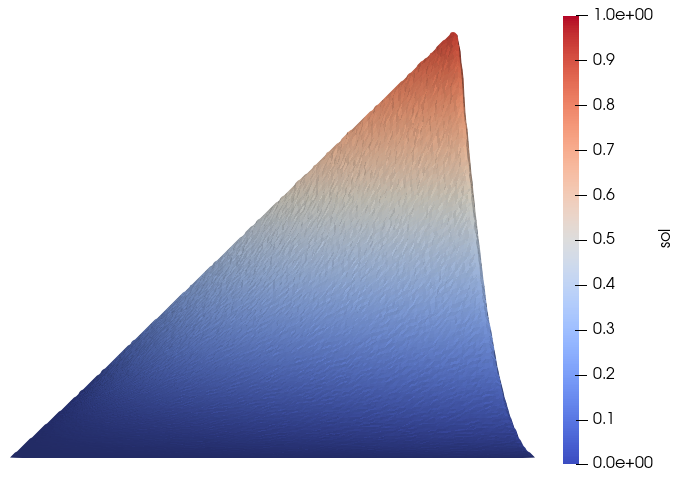} 
	\end{tabular}
	\caption{Solution profile, hybridised second-order scheme with vanishing diffusion, test \ref{sec:test_bdLayer}. (left: $\mesh_1$, right: $\mesh_3$).}
	\label{fig.vd_sol}
\end{figure}

%\begin{figure}[h]
%	\begin{tabular}{cc}
%		\includegraphics[width=0.45\linewidth]{} & \includegraphics[width=0.45\linewidth]{} 
%	\end{tabular}
%	\caption{Solution profile, test \ref{sec:test_eps} with $\epsilon = 2^{-9}$. (left:  hybridised second-order scheme, right: exact solution).}
%	\label{fig.3d_eps_sol}
%\end{figure}

\subsubsection{Strongly anisotropic heterogeneous and convection-dominated case}\label{sec:hetero_test}
Finally, we present a numerical test which involves a strongly heterogeneous and anisotropic diffusion tensor, as described in \cite{VDM11-adv-diff,ESZ09-ADG}. For this test, an exact analytic solution is not available, so we comment on the qualitative properties of the numerical solution. Here, homogeneous Dirichlet boundary conditions are imposed, and we use a source term $f(x,y)= 10^{-2} \exp(-(r-0.35)^2/0.005)$, where $r^2 = (x-0.5)^2+(y-0.5)^2$. The diffusion tensor is piecewise constant, defined in the following subdomains: $\O_1=(0,2/3) \times (0,2/3), \O_2 = (2/3,1)\times (0,2/3), \O_3 = (2/3,1) \times (2/3,1), \O_4 = (0,2/3)\times(2/3,1)$, with
\[
\Lambda = 
\begin{bmatrix}
10^{-6} & 0 \\
0 & 1
\end{bmatrix} \qquad \mbox{ in } \O_1 \mbox{ and } \O_3,
\]
and
\[\Lambda = 
\begin{bmatrix}
1 & 0 \\
0 & 10^{-6}
\end{bmatrix} \qquad \mbox{ in } \O_2 \mbox{ and } \O_4.
\]
The velocity field considered is $\V=(40x(2y-1)(x-1),-40y(2x-1)(y-1))^T$, which simulates a counterclockwise rotation. Figure \ref{fig.hetero_sol} shows the numerical solution obtained from the hybridised second-order scheme on a regular and distorted mesh ($\mesh_1$ and $\mesh_3$), respectively. Here, the distorted mesh $\mesh_3$ is modified so that it matches the discontinuities (see Figure \ref{fig.modified_mesh}). The numerical solutions obtained on other types of meshes exhibit similar behaviors.
\begin{figure}[h]

	\begin{tabular}{cc}
		\includegraphics[width=0.42\linewidth]{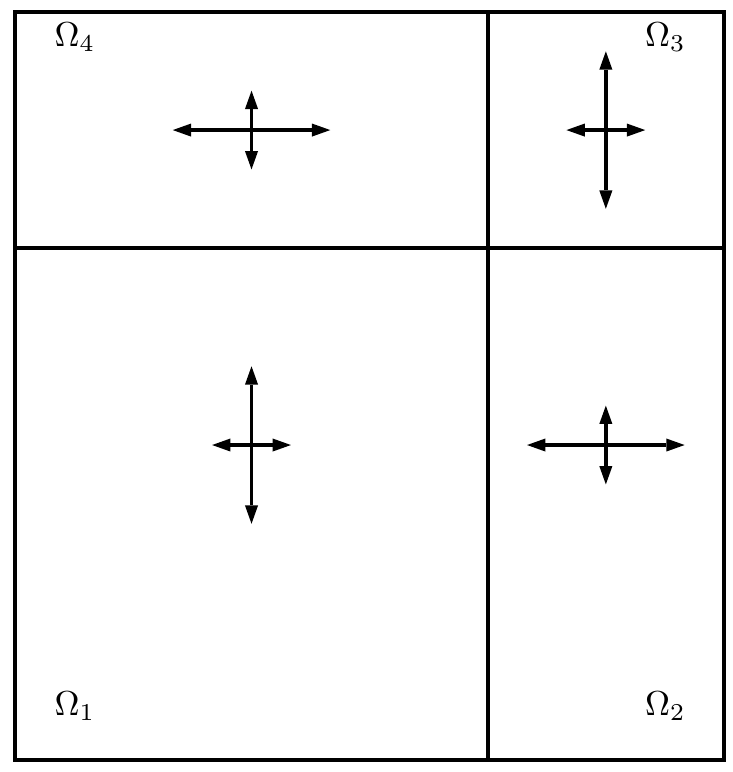} & \includegraphics[width=0.45\linewidth]{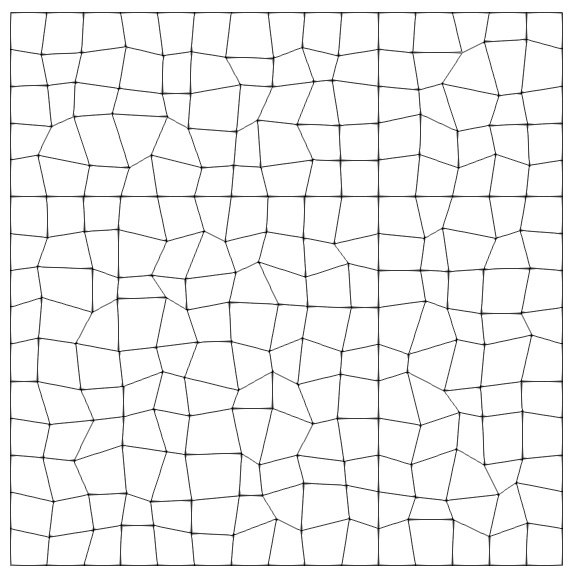} 
	\end{tabular}
		\caption{Diffusion tensor and mesh, test \ref{sec:hetero_test}. }
	\label{fig.modified_mesh}
\end{figure}
\begin{figure}[h]
	\begin{tabular}{cc}
		\includegraphics[width=0.45\linewidth]{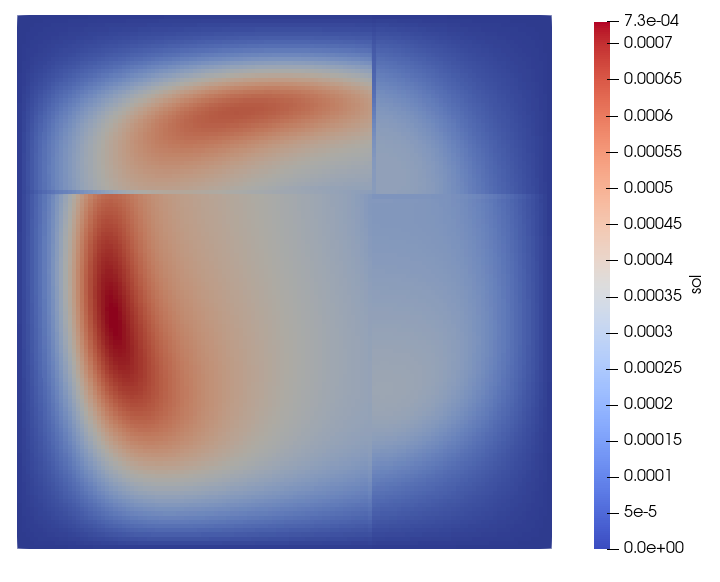} & \includegraphics[width=0.45\linewidth]{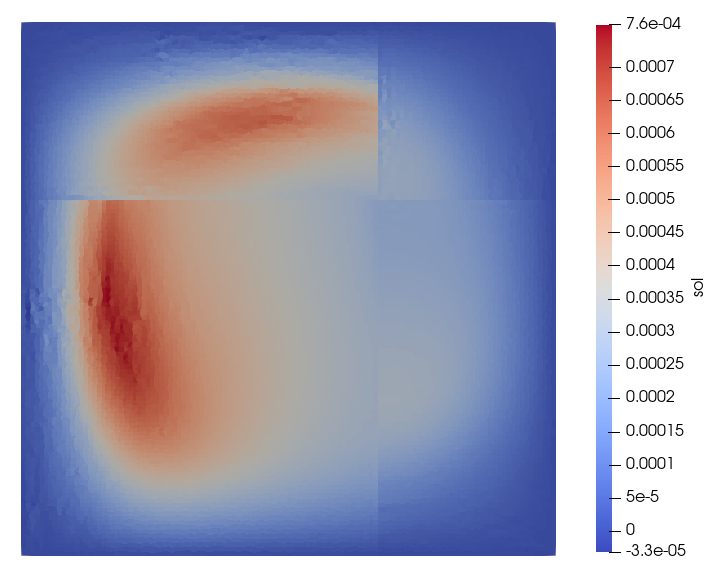} 
	\end{tabular}
	\caption{Solution profile, test \ref{sec:hetero_test}. (left: $\mesh_1$, right: $\mesh_3$).}
	\label{fig.hetero_sol}
\end{figure}

Here, we observe maximum values of $7.3 \times 10^{-4}$ and $7.6 \times 10^{-4}$ for the regular and distorted meshes, respectively. The scheme works well on regular meshes in the sense that the solution profile is similar to those observed in \cite{VDM11-adv-diff,ESZ09-ADG}. Also, the maximum value of $7.3 \times 10^{-4}$ is very close to $6.9\times 10^{-4}$ in the literature. However, it can be seen in Figure \ref{fig.hetero_sol}, right, that some spurious oscillations are present on the distorted mesh. As with test \ref{sec:test_bdLayer}, such a problem is not encountered when using a hybridised upwind scheme. These oscillations can be mitigated by either refining the mesh, or using the modified diffusion tensor \eqref{eq:mod_diff}, as seen in Figure \ref{fig.hetero_sol1}. The main advantage, however, of using the modified diffusion tensor over mesh refinement is that a better quality of the solution profile is obtained without having to introduce additional DOFs for solving the system.

\begin{figure}[h]
	\begin{tabular}{cc}
		\includegraphics[width=0.45\linewidth]{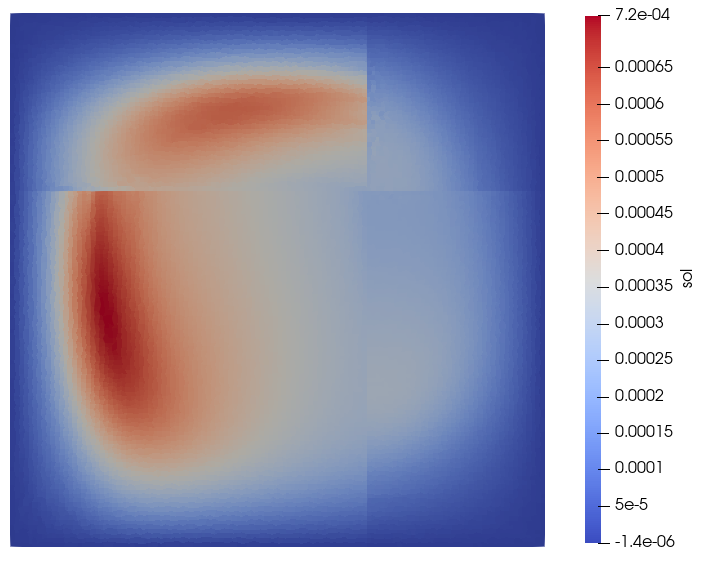} & \includegraphics[width=0.45\linewidth]{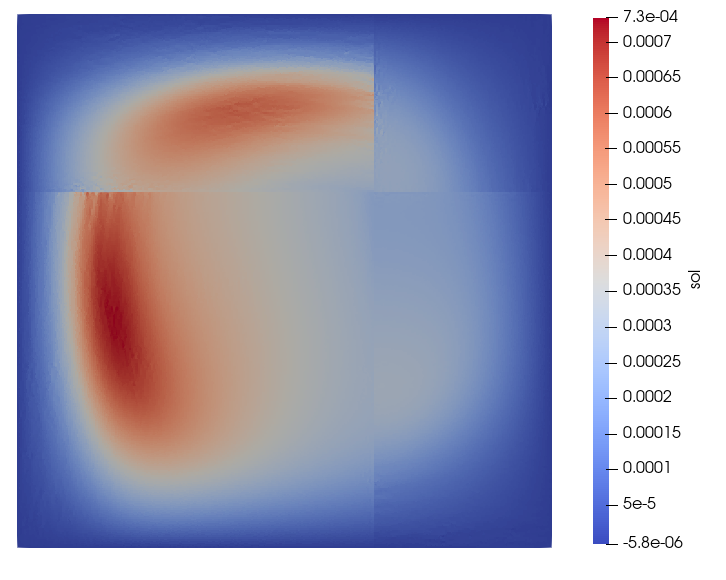} 
	\end{tabular}
	\caption{Solution profile, test \ref{sec:hetero_test}, $\mesh_3$. (left: vanishing diffusion, right: mesh refinement).}
	\label{fig.hetero_sol1}
\end{figure}
\section{Conclusion}
In this work, we proposed a fully local hybridised second-order finite volume scheme for advection-diffusion equations. We then presented a convergence proof for these hybridised second-order schemes, which also cover flux-limited variants. Numerical results were then presented to compare the hybridised second-order scheme with the hybridised upwind scheme, and the classical cell-centered second-order scheme. Firstly, we note that the hybridised second-order scheme provided solutions which are second-order convergent, and gradients which are first-order convergent. It was also shown that the hybridised second-order scheme had a good $\epsilon$-sensitivity. Upon comparison with the cell-centered scheme, the hybridised scheme can achieve the same level or even better accuracy on much coarser meshes. Moreover, the hybridised scheme can straightforwardly be extended onto generic meshes, whereas further improvement on the approximate gradient \eqref{eq:discGrad_upwind} is needed in order to apply the cell-centered schemes onto generic meshes, otherwise the convergence reduces to first order. Another advantage of the hybridised scheme over the cell-centered second-order schemes is that the stencil only depends on local values, and does not need information from neighboring cells, which allows it to be straightforwardly implemented near the boundaries of the domain. Moreover, static condensation can be employed in order to implement the hybridised scheme efficiently. Comparison with hybridised upwind schemes also shows the advantage of the hybridised second-order schemes in terms of the convergence of the solution and the gradient. However, in some instances, for solutions with boundary layers, the solution from the hybridised second-order scheme exhibited non-physical oscillations. This is expected from linear second-order schemes. In comparison, the hybridised upwind scheme does not encounter such problems. In order to mitigate these non-physical oscillations, we propose an idea which involves introducing artificial vanishing diffusion to the hybridised second-order scheme. This reduces the order of convergence to 1.5, but still offers an improvement over the hybridised upwind scheme. One prospect for future work would involve trying to determine when artificial diffusion is needed, and locate on which cells it needs to be introduced, so that the second-order accuracy is preserved over regions for which the solution is smooth. Another avenue for future work would involve extending these ideas to hybrid high order schemes \cite{DSGD15-adv-diff}, with the aim of obtaining order $h^{(k+2)}$ estimates for polynomials of degree $k$ in advection-dominated regimes. We also aim to extend these ideas to time-dependent advection diffusion equations.
\section{Acknowledgements}
The author would want to thank Prof. J\'er\^ome Droniou and Prof. Barry Koren for the discussions and advice, which helped improve the presentation of the paper.
\section{Appendix}
In this section, we present without proof two lemmas from \cite{VDM11-adv-diff,EGH10-SUSHI} which are used in the convergence proof of Theorem \ref{th:conv}.
\begin{lemma}[Discrete Sobolev inequality] \label{lem:discSob}
	Let $\mesh_h$ be an admissible discretisation of $\O$ satisfying assumptions \ref{assum. mesh_star} and \ref{assum. mesh_reg}. Let $\theta > 0$ and such that $\theta<\frac{d_{K,\sigma}}{d_{K',\sigma}} < \theta^{-1}$  for all $\sigma \in \edges_{h,int}$. Let $r =\frac{2d}{d-2}$ if $d>2$ and $r < \infty$ if $d = 2$. Then there exists a real positive constant $C$ that only depends on $\O, \theta$ and $r$ such that, for all $q_h \in X_{\disc_h}$, we have $\norm{q_h}{L^r(\O)} \leq C\norm{q_h}{1,\disc_h}$ .
\end{lemma}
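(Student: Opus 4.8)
The plan is to follow the classical two-step route for discrete Sobolev embeddings of Eymard–Gallou\"et–Herbin type used for the SUSHI/HMM framework. I reduce to the case $d\geq 3$ with $r=\frac{2d}{d-2}$; the borderline case $d=2$ (arbitrary finite $r$) follows from the same slicing argument with the interpolation exponent adjusted to $r$ and one extra H\"older step on the bounded domain $\O$. Throughout I identify $q_h$ with its piecewise-constant reconstruction, so that $\norm{q_h}{L^r(\O)}^r=\sum_{K\in\mesh_h}|K|\,|q_K|^r$, and for an interior face $\sigma=K|L$ I use the hybrid ``jump'' $D_\sigma q_h:=|q_K-q_\sigma|+|q_\sigma-q_L|$, while on a boundary face I set $D_\sigma q_h:=|q_K-q_\sigma|$. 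The point is that the face unknown $q_\sigma$ sits naturally at $\sigma$, which is exactly what lets the hybrid norm $\norm{q_h}{1,\disc_h}$ decompose cell by cell.

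First I would establish a discrete $BV$–Sobolev inequality (Step 1): for every $v_h\in X_{\disc_h}$,
\begin{equation}\nonumber
\norm{v_h}{L^{d/(d-1)}(\O)}\leq C\sum_{\sigma\in\edges_h}|\sigma|\,D_\sigma v_h,
\end{equation}
with $C$ depending only on $\O$, $\theta$ and $d$. I would prove this by the discrete Gagliardo–Nirenberg slicing technique: for each coordinate direction $e_i$ and almost every line $\ell$ parallel to $e_i$, the value $|v_K|$ in a cell met by $\ell$ is bounded by the sum of the jumps $D_\sigma v_h$ over the faces crossed by $\ell$, and crossing $\sigma$ contributes its projected measure $|\sigma|\,|\mathbf{n}_{K,\sigma}\cdot e_i|$ once one integrates over the $(d-1)$-dimensional family of parallel lines. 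Taking the product of the $d$ one-dimensional bounds, applying the generalised H\"older inequality in the $d$ directions, and integrating over $\O$ yields the stated estimate; assumptions \ref{assum. mesh_star}–\ref{assum. mesh_reg} enter to guarantee that each line crosses the cells cleanly and that the number of faces per cell and the geometric constants remain uniformly bounded as $h\to0$.

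Step 2 is the power-substitution bootstrap upgrading the $W^{1,1}$-type embedding to the $H^1$-type one. Setting $t:=\frac{2(d-1)}{d-2}$, I apply Step 1 to $v_h$ defined by $v_K=|q_K|^{t-1}q_K$ and $v_\sigma=|q_\sigma|^{t-1}q_\sigma$, for which $\norm{v_h}{L^{d/(d-1)}(\O)}=\norm{q_h}{L^r(\O)}^{t}$ since $\tfrac{td}{d-1}=r$. Using the elementary bound $\big||a|^{t-1}a-|b|^{t-1}b\big|\leq t\,\max(|a|,|b|)^{t-1}|a-b|$ on each $(K,\sigma)$ pair, followed by Cauchy–Schwarz in the splitting $|\sigma|\,|q_K-q_\sigma|=\big(\tfrac{|\sigma|}{d_{K,\sigma}}\big)^{1/2}|q_K-q_\sigma|\cdot\big(|\sigma|\,d_{K,\sigma}\big)^{1/2}$, gives
\begin{equation}\nonumber
\norm{q_h}{L^r(\O)}^{t}\lesssim\norm{q_h}{1,\disc_h}\bigg(\sum_{K\in\mesh_h}\sum_{\sigma\in\edgescv}|\sigma|\,d_{K,\sigma}\max(|q_K|,|q_\sigma|)^{2(t-1)}\bigg)^{1/2}.
\end{equation}
Because $2(t-1)=r$ and $\sum_{\sigma\in\edgescv}|\sigma|\,d_{K,\sigma}=d|K|$, the last factor is $\lesssim\norm{q_h}{L^r(\O)}^{r/2}$, where \ref{assum. mesh_reg} together with the non-degeneracy bound $\theta<d_{K,\sigma}/d_{K',\sigma}<\theta^{-1}$ controls the exchange between $q_\sigma$ and the neighbouring cell values. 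Since $t-\tfrac{r}{2}=1$, dividing through by $\norm{q_h}{L^r(\O)}^{r/2}$ yields $\norm{q_h}{L^r(\O)}\lesssim\norm{q_h}{1,\disc_h}$, which is the assertion.

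The main obstacle is Step 1: on a generic polytopal mesh the geometric slicing estimate — relating, for almost every coordinate line, the jumps accumulated along it to the face-measure-weighted discrete variation $\sum_\sigma|\sigma|\,D_\sigma v_h$, and then integrating uniformly over the family of parallel lines — is the technical heart of the argument, and it is precisely where the regularity assumptions \ref{assum. mesh_star}–\ref{assum. mesh_reg} and the bound $\theta<d_{K,\sigma}/d_{K',\sigma}<\theta^{-1}$ are needed to keep all constants independent of $h$. By contrast, the bootstrap of Step 2 and the attendant H\"older bookkeeping are routine once the exponents are fixed, the only subtlety being the passage from face values $q_\sigma$ to cell values, which \emph{again} relies on mesh regularity.
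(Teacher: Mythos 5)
The paper does not actually prove this lemma: it is stated in the Appendix explicitly ``without proof'' as a result imported from \cite{VDM11-adv-diff,EGH10-SUSHI}, so there is no in-paper argument to compare against. Your two-step route --- a discrete $BV$--Sobolev ($W^{1,1}$-type) inequality obtained by the Gagliardo--Nirenberg slicing of coordinate lines, followed by the power-substitution bootstrap with $t=\tfrac{2(d-1)}{d-2}$, $\tfrac{td}{d-1}=r$, $2(t-1)=r$ and $t-\tfrac r2=1$ --- is precisely the classical proof in those references, and your exponent bookkeeping checks out. So in substance you have reconstructed the intended (external) proof rather than found a different one.

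Two points deserve to be made explicit rather than left implicit. First, the inequality as stated is false for nonzero constants ($q_K=q_\sigma=1$ gives $\norm{q_h}{1,\disc_h}=0$), so both the statement and your Step 1 tacitly require the zero-boundary-value convention $q_\sigma=0$ for $\sigma\in\edges_{\rm ext}$; your choice $D_\sigma v_h=|v_K-v_\sigma|$ on boundary faces only closes the slicing argument under that convention, and you should say so. Second, in Step 2 the bound of $\sum_{K}\sum_{\sigma\in\edgescv}|\sigma|\,d_{K,\sigma}\max(|q_K|,|q_\sigma|)^{r}$ by $\norm{q_h}{L^r(\O)}^{r}$ is the one genuinely hybrid-specific difficulty and is currently only asserted: the $|q_K|^r$ part is handled by $\sum_{\sigma\in\edgescv}|\sigma|d_{K,\sigma}=d|K|$, but the $|q_\sigma|^r$ part has no cell measure attached to it a priori. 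One must write $|q_\sigma|^r\lesssim|q_K|^r+|q_K-q_\sigma|^r$ and then absorb the jump contribution, e.g.\ by interpolating $|\sigma|d_{K,\sigma}|q_K-q_\sigma|^r=\bigl(\tfrac{|\sigma|}{d_{K,\sigma}}|q_K-q_\sigma|^2\bigr)\,d_{K,\sigma}^{2}|q_K-q_\sigma|^{r-2}$ and invoking an already-established $L^{r'}$ bound for some $r'<r$ (the usual iteration in \cite{EGH10-SUSHI}), or by a Young-inequality absorption into the left-hand side. Without spelling this out the chain of inequalities in Step 2 is not yet closed; with it, your argument is the standard and correct one.
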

\begin{lemma}[Discrete Rellisch theorem]\label{lem:discRel} Let $\Lambda$ be a diffusion tensor satisfying hypothesis \ref{assum.Diff}. Let $(\mesh_h)_{h\rightarrow0}$ be a family of admissible discretizations of $\O$ with mesh size $h$ tending to 0 and satisfying the regularity assumptions \ref{assum. mesh_star} and \ref{assum. mesh_reg}. Let $c_h \in X_{\disc_h}$ be a numerical scalar field such that $\norm{c_h}{1,\disc_h}$ remains bounded	as $h \rightarrow 0$. Then there exists a scalar field $c\in H^1(\O)$ such that, up to a subsequence as $h \rightarrow 0$, the following hold:
	\begin{enumerate}
		\item $c_h \rightarrow c$ in $L^r(\O)$ for all $r < \frac{2d}{d-2}$;
		\item $\ograd_{\disc_h} c_h \rightarrow \nabla c$ weakly in $L^2(\O)^d$.
	\end{enumerate}
\end{lemma}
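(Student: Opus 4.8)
The plan is to follow the classical compactness strategy for finite-volume and gradient-discretisation methods (as in \cite{EGH10-SUSHI,GDMBook16}), split into a weak-compactness step for the gradients, a strong-compactness step for the functions via an estimate on space translates, and an identification step via discrete integration by parts. Throughout, the hypothesis \ref{assum.Diff} on $\Lambda$ plays no direct role: all that is used is that the uniform bound on $\norm{c_h}{1,\disc_h}$ simultaneously controls the reconstruction $\Pi_{\disc_h}c_h$ and the consistent gradient $\ograd_{\disc_h}c_h$.

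First I would extract the weak limit of the gradients. By the gradient estimate \eqref{eq:estGrad}, the bound on $\norm{c_h}{1,\disc_h}$ yields a uniform bound on $\norm{\ograd_{\disc_h}c_h}{L^2(\O)}$, so by reflexivity of $L^2(\O)^d$ there is, up to a subsequence, a field $G\in L^2(\O)^d$ with $\ograd_{\disc_h}c_h\rightharpoonup G$ weakly in $L^2(\O)^d$. Likewise the discrete Sobolev inequality (Lemma \ref{lem:discSob}) bounds $\norm{\Pi_{\disc_h}c_h}{L^r(\O)}$ uniformly for $r=\tfrac{2d}{d-2}$, so in particular $\Pi_{\disc_h}c_h$ stays bounded in $L^2(\O)$.

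The heart of the argument is to upgrade this to strong convergence of $\Pi_{\disc_h}c_h$ in $L^2(\O)$. Extending $\Pi_{\disc_h}c_h$ by zero outside $\O$, I would establish the translate estimate
\[
\int_{\R^d}\big|\Pi_{\disc_h}c_h(\x+\bxi)-\Pi_{\disc_h}c_h(\x)\big|^2\,\d\x\lesssim|\bxi|\,(|\bxi|+h)\,\norm{c_h}{1,\disc_h}^2,
\]
uniformly in $h$. The core point is to control the interface jumps $|c_K-c_\sigma|$ that define $\norm{c_h}{1,\disc_h}$, accumulated along a segment of length $|\bxi|$; the number of interfaces such a segment may cross, together with the comparisons $\sum_{\sigma\in\edges_K}|\sigma|d_{K,\sigma}=d|K|$ and the uniform bound on $\mathrm{card}(\edges_K)$, is exactly what the mesh regularity \ref{assum. mesh_reg} supplies. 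Combined with the uniform $L^2(\O)$ bound, the Kolmogorov--Riesz--Fr\'echet compactness criterion then produces a further subsequence along which $\Pi_{\disc_h}c_h\to c$ strongly in $L^2(\O)$ for some $c\in L^2(\O)$. This translate estimate is the main technical obstacle, since it is where the geometry of generic polytopal meshes genuinely enters.

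Finally I would identify $c$ and $G$. Fixing $\bvarphi\in C_c^\infty(\O)^d$ and writing $\bvarphi_K=\bvarphi(\x_K)$, a discrete integration by parts (using that the edge values $c_\sigma$ are single-valued and that the outward normals of two cells sharing an edge are opposite) gives
\[
\int_\O\ograd_{\disc_h}c_h\cdot\bvarphi\,\d\x = -\int_\O\Pi_{\disc_h}c_h\,\divg\bvarphi\,\d\x + \rho_h,
\]
where the remainder $\rho_h$ collects the consistency errors from replacing $\int_\sigma\bvarphi\cdot\mathbf{n}_{K,\sigma}\,\dbdr$ by $|\sigma|\bvarphi(\x_K)\cdot\mathbf{n}_{K,\sigma}$ and $\int_K\bvarphi\,\d\x$ by $|K|\bvarphi(\x_K)$. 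Exactly as in the treatment of $T_4$ and \eqref{eq:argT_23} in the proof of Theorem \ref{th:conv}, one bounds $|\rho_h|\lesssim h\,\norm{\nabla\bvarphi}{\infty}\,\norm{c_h}{1,\disc_h}\lesssim h\to0$. Passing to the limit using $\ograd_{\disc_h}c_h\rightharpoonup G$ on the left and the strong convergence $\Pi_{\disc_h}c_h\to c$ on the right yields $\int_\O G\cdot\bvarphi\,\d\x=-\int_\O c\,\divg\bvarphi\,\d\x$ for all such $\bvarphi$, which is precisely the statement that $c\in H^1(\O)$ with $\nabla c=G$; this settles claim (2). For claim (1) in the stated range, I would interpolate the strong $L^2(\O)$ convergence against the uniform $L^{2d/(d-2)}(\O)$ bound of Lemma \ref{lem:discSob}, which promotes strong convergence to $L^r(\O)$ for every $r<\tfrac{2d}{d-2}$.
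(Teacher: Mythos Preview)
The paper does not actually prove this lemma: it is stated in the Appendix explicitly ``without proof'' and attributed to \cite{VDM11-adv-diff,EGH10-SUSHI}. So there is no in-paper argument to compare against; your outline is precisely the standard one from those references (and from \cite[Lemma B.19 and surrounding material]{GDMBook16}): weak compactness of $\ograd_{\disc_h}c_h$ via \eqref{eq:gradConsistent}, strong $L^2$ compactness of $\Pi_{\disc_h}c_h$ via a Kolmogorov--Riesz translate estimate driven by the interface jumps in $\norm{\cdot}{1,\disc_h}$, identification of the weak limit $G$ with $\nabla c$ by discrete Stokes, and interpolation against Lemma~\ref{lem:discSob} to reach all $r<\tfrac{2d}{d-2}$.

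One small point worth tightening in your write-up: the extension-by-zero step in the translate estimate uses that the boundary contributions $\frac{|\sigma|}{d_{K,\sigma}}|q_K-q_\sigma|^2$ are part of $\norm{\cdot}{1,\disc_h}$. The lemma as stated in the Appendix only asserts $c\in H^1(\O)$, whereas in the proof of Theorem~\ref{th:conv} the paper actually uses the conclusion $c\in H^1_0(\O)$; that stronger conclusion is what your extension-by-zero argument naturally delivers once one assumes $c_\sigma=0$ on $\edges_{\mathrm{ext}}$ (which is the setting of Theorem~\ref{th:conv}). If you want your proof to match the lemma exactly as stated (no boundary condition on $c_\sigma$), you should either restrict the translate estimate to compact subsets of $\O$ or note that the conclusion is really $c\in H^1_0(\O)$ under homogeneous boundary data.
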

	\bibliographystyle{abbrv}
	\bibliography{2d-fully_local}
\end{document}